\numberwithin{equation}{section}
\newcommand{\N}{\mathbb{N}}
\newcommand{\R}{\mathbb{R}}
\newcommand{\mm}{{\mbox{\boldmath$m$}}}
\newcommand{\sfd}{{\sf d}}
\newcommand{\Kliminf}{K\kern-3pt-\kern-2pt\mathop{\rm lim\,inf}\limits}  % Kuratowski liminf di insiemi
\newcommand{\supp}{\mathop{\rm supp}\nolimits}   % supporto 
\newcommand{\Lip}{\mathop{\rm Lip}\nolimits}          %Lipschitz constant
\renewcommand{\d}{{\mathrm d}}
\newcommand{\restr}[1]{\lower3pt\hbox{$|_{#1}$}}
\newcommand{\la}{\left<}                  % brackets
\newcommand{\ra}{\right>}
\newcommand{\eps}{\varepsilon}  
\newcommand{\nchi}{{\raise.3ex\hbox{$\chi$}}}
\newcommand{\weakto}{\rightharpoonup}
\newcommand{\lims}{\varlimsup}
\newcommand{\fr}{\penalty-20\null\hfill$\blacksquare$}                      %quadratino nero alla fine del remark, se non vi piace, la cosa migliore e' `svuotare' la macro, cosi' non bisogna intervenire sul testo
\renewcommand{\mm}{\mathfrak m}                                %misura di riferimento
\renewenvironment{proof}{\removelastskip\par\medskip   % inizio e fine dimostrazione
\noindent{\em proof} \rm}{\penalty-20\null\hfill$\square$\par\medbreak}
\newtheorem{theorem}{Theorem}[section]
\newtheorem{corollary}[theorem]{Corollary}
\newtheorem{lemma}[theorem]{Lemma}
\newtheorem{proposition}[theorem]{Proposition}
\newtheorem{definition}[theorem]{Definition}
\newtheorem{remark}[theorem]{Remark}
\newcommand{\RCD}{{\sf RCD}}
\newcommand{\lip}{{\rm lip}}
\newcommand{\ud}{\underline{\d}}
\newcommand{\LIP}{{\rm LIP}}
\renewcommand{\P}{{\sf Pr}}
\title{Behaviour of the reference measure on $\RCD$ spaces under charts}
\begin{document}

\author{
   Nicola Gigli\thanks{SISSA. email: \textsf{ngigli@sissa.it}} \ and Enrico Pasqualetto\thanks{SISSA. email: \textsf{epasqual@sissa.it}}
   }

\maketitle

\begin{abstract}

Mondino and Naber recently proved that finite dimensional $\RCD$ spaces are rectifiable. 

Here we show that the push-forward of the reference measure under the charts built by them is absolutely continuous with respect to the Lebesgue measure. This result, read in conjunction with another recent work of us, has relevant implications on the structure of tangent spaces to $\RCD$ spaces.

A key tool that we use is a recent paper by De Philippis-Rindler about the structure of measures on the Euclidean space.
\end{abstract}

%\newpage

\tableofcontents
%\newpage

\section{Introduction}
This paper is about the structure of charts in finite dimensional spaces with Ricci curvature bounded from below, $\RCD$ spaces in short. Our starting point is a result by Mondino-Naber \cite{Mondino-Naber14},  which can be roughly stated as:
\begin{quote}
Let $(X,\sfd,\mm)$ be a $\RCD^*(K,N)$ space and $\eps>0$. Then $\mm$-a.e.\ $X$ can be partitioned into a countable number of Borel subsets $(U_i)$, each $(1+\eps)$-biLipschitz to some subset of $\R^{n_i}$, where $n_i\leq N$ for every $i$.
\end{quote}
We shall provide the rigorous statement in Theorem \ref{thm:MN}.

In \cite{Mondino-Naber14}, the behaviour of the reference measure $\mm$ under the coordinate charts  is not studied. However, both for theoretical purposes ($\RCD$ spaces are metric \emph{measure} spaces, after all) and for practical ones (see the discussion below) it would be interesting to know the relation between $\mm$, the charts and the Lebesgue measure. This is the scope of this note, our main result being, again informally:
\begin{quote}
Let $(X,\sfd,\mm)$ be a $\RCD^*(K,N)$ space, $\eps>0$ sufficiently small and $(U_i,\varphi_i)$ the partition given by Mondino-Naber's theorem and the associated coordinate charts. Then 
\begin{equation}
\label{eq:thesis}
(\varphi_i)_*(\mm\restr{U_i})\ll\mathcal L^{n_i}\qquad\forall i.
\end{equation}
\end{quote}
See Theorem \ref{thm:main} for the precise statement and notice that our result is equivalent to the fact that  the restriction of $\mm$ to $U_i$ is absolutely continuous w.r.t.\ the $n_i$-dimensional Hausdorff measure.

We remark that in the case of Ricci-limit spaces (=mGH limits of Riemannian manifolds with uniform bound from below on the Ricci curvature and from above on the dimension), the analogous of our result  was already known from the work of Cheeger-Colding \cite{Cheeger-Colding97II}. However, the technique used in \cite{Cheeger-Colding97II} is not applicable to our setting, the problem being that in \cite{Cheeger-Colding97II} the spaces considered are limits of manifolds equipped with the volume measure, a  fact leading to some cancellations which are not present in the weighted case. Specifically, the key Lemma 1.14 in \cite{Cheeger-Colding97II} does not hold on weighted Riemannian manifolds, and a fortiori does not hold on $\RCD$ spaces.

\bigskip

Our argument, instead, uses as key tool the following recent result by De Philippis-Rindler \cite{DPR}:
\begin{theorem}\label{thm:DPR}
Let $T_1=\overrightarrow {T_1}\|T_1\|,\ldots,T_d=\overrightarrow{ T_d}\|T_d\|$ be one dimensional normal currents in $\R^d$ and $\mu$ a Radon measure on $\R^d$. Assume that:
\begin{itemize}
\item[i)] $\mu\ll \|T_i\|$ for $i=1,\ldots,d$,
\item[ii)] for $\mu$-a.e.\ $x$ the vectors $\overrightarrow {T_1}(x),\ldots,\overrightarrow {T_d}(x)$ are linearly independent.
\end{itemize}
Then $\mu$ is absolutely continuous w.r.t.\ the Lebesgue measure on $\R^d$.
\end{theorem}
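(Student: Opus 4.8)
The plan is to argue by contradiction through a blow-up analysis, reducing the statement to a rigidity property for the tangent measures of $\mu$. Write the Lebesgue decomposition $\mu=\mu^a+\mu^s$ with $\mu^a\ll\mathcal L^d$ and $\mu^s\perp\mathcal L^d$; it suffices to prove $\mu^s=0$. Since $\mu\ll\|T_i\|$ I may write $\mu=h_i\|T_i\|$ with $h_i=\frac{\d\mu}{\d\|T_i\|}\geq0$, and decomposing $\|T_i\|=\|T_i\|^a+\|T_i\|^s$ one gets $\mu^s=h_i\,\|T_i\|^s$; in particular $\mu^s\ll\|T_i\|^s$ for every $i$. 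Thus every property holding at $\|T_i\|^s$-a.e.\ point also holds at $\mu^s$-a.e.\ point, for each of the finitely many indices $i$.

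First I would exploit the normality of each $T_i$ at the level of tangent measures. By definition $\mathrm{div}(\overrightarrow{T_i}\|T_i\|)=-\partial T_i=:\sigma_i$ is a finite measure, and by Besicovitch differentiation the density $\frac{\d|\sigma_i|}{\d\|T_i\|}$ is finite at $\|T_i\|$-a.e.\ point. Fixing such a point $x$ and rescaling by $T_{x,r}(y)=(y-x)/r$, the vector measure $\overrightarrow{T_i}\|T_i\|$ blows up (after normalising by $\|T_i\|(B_r(x))$) to $\overrightarrow{T_i}(x)\,\tau$, where $\tau$ is a tangent measure of $\|T_i\|$ at $x$ and I have used that the polar $\overrightarrow{T_i}$ is approximately continuous $\|T_i\|$-a.e.; meanwhile the rescaled divergence carries the factor $r\,\frac{|\sigma_i|(B_r(x))}{\|T_i\|(B_r(x))}\to0$. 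Hence in the limit $\mathrm{div}(\overrightarrow{T_i}(x)\,\tau)=0$, i.e.\ $\partial_{\overrightarrow{T_i}(x)}\tau=0$: \emph{every tangent measure of $\|T_i\|$ at $x$ is invariant under translations in the direction $\overrightarrow{T_i}(x)$}.

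Next I would transfer this to $\mu$ and combine the $d$ directions. Because $\mu=h_i\|T_i\|$ with $h_i$ approximately continuous and strictly positive at $\mu^s$-a.e.\ point, the tangent measures of $\mu$ and of $\|T_i\|$ at such a point coincide up to a positive multiplicative constant; choosing a single blow-up sequence that works simultaneously for the finitely many currents, one obtains that a common tangent measure $\tau$ of $\mu$ is invariant in each of the directions $\overrightarrow{T_1}(x),\dots,\overrightarrow{T_d}(x)$. By hypothesis (ii) these span $\R^d$, so $\tau$ is invariant under all translations and therefore $\tau=c\,\mathcal L^d$ for some $c>0$. In other words, at $\mu^s$-a.e.\ point every tangent measure of $\mu$ is a nonzero multiple of the Lebesgue measure.

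The main obstacle is the final implication: deducing from this tangent rigidity that $\mu^s=0$. This is genuinely the heart of the matter and is exactly where the depth of the De Philippis--Rindler analysis enters, because the mere presence of flat (Lebesgue) tangent measures does not by itself contradict singularity once the blow-up normalisation is left free. One therefore has to upgrade the conclusion---either by promoting ``invariant along $\overrightarrow{T_i}(x)$'' to the stronger information that the singular part of each normal current is carried by a lower-dimensional rectifiable set to which $\overrightarrow{T_i}$ is tangent (so that two independent directions already force the tangent measure to concentrate on two distinct lines, an immediate contradiction), or by invoking the measure-theoretic rigidity that a measure all of whose tangent measures are multiples of $\mathcal L^d$ must be absolutely continuous. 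Alongside this, the two technical points requiring care are the simultaneous control, on a common full-measure set of points, of the blow-up limits and of the approximate continuity of the densities $h_i$ for all $i$, and the justification that the rescaled divergences vanish; both are standard once the differentiation estimates above are in place.
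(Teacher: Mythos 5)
You should first be aware that the paper contains no proof of Theorem \ref{thm:DPR} to compare against: the authors import it verbatim from \cite{DPR}, where it is one of several consequences of the main structure theorem for $\mathcal{A}$-free measures (the polar of the singular part of an $\mathcal{A}$-free measure lies a.e.\ in the wave cone of the operator; for the divergence system this forces the vectors $\overrightarrow{T_1}(x),\ldots,\overrightarrow{T_d}(x)$ to be linearly dependent $\mu^s$-a.e., contradicting (ii)). Judged on its own terms, the first two stages of your sketch are essentially sound: the identification $\mu^s=h_i\,\|T_i\|^s$ follows from uniqueness of the Lebesgue decomposition, and the one-scale blow-up argument (boundary mass of the rescaled currents scaling like $r_n\,|\partial T_i|(B_{Rr_n}(x))/\|T_i\|(B_{r_n}(x))\to 0$ at points where the Besicovitch ratio is finite, plus approximate continuity of $\overrightarrow{T_i}$ and $h_i$) does yield, modulo routine care with normalisations, that at $\mu$-a.e.\ point every tangent measure of $\mu$ is invariant in $d$ independent directions, hence a multiple of $\mathcal{L}^d$.

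The genuine gap is the final implication, which you flag but whose severity you underestimate: neither of your two proposed repairs can work, so the sketch reduces the theorem to its hard core without proving it. The rigidity you invoke in the second repair is \emph{false}: a measure all of whose tangent measures are multiples of $\mathcal{L}^d$ need not be absolutely continuous. For instance, Riesz products $\mu=\lim_m\prod_{k=1}^m\big(1+a_k\cos(4^k x)\big)\,\mathcal{L}^1$ with $a_k\downarrow 0$ and $\sum_k a_k^2=\infty$ are purely singular, yet their blow-ups at scale $r$ are governed by the tail factors with frequencies above $1/r$, whose amplitudes are $o(1)$, so every tangent measure is a multiple of $\mathcal{L}^1$; tangent measures simply cannot detect singularity, which is exactly why the proof in \cite{DPR} is a quantitative multi-scale/Fourier argument rather than a blow-up. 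Your first repair also fails structurally: the singular mass of a normal $1$-current need not be carried by any lower-dimensional rectifiable set. Take $T=e_1\,\big(\mathcal{L}^1\restr{[0,1]}\times\nu\big)$ with $\nu$ a nonatomic measure on $\R^{d-1}$ singular w.r.t.\ $\mathcal{L}^{d-1}$: then $\partial T=(\delta_1-\delta_0)\otimes\nu$ has finite mass, so $T$ is normal, while $\|T\|=\mathcal{L}^1\times\nu$ is singular and gives zero mass to every $\mathcal{H}^1$-$\sigma$-finite set, so the ``two transversal lines'' contradiction has no set on which to localise. In the context of this paper the correct move is the one the authors make: quote \cite{DPR} as a black box.
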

We remark that such statement is only one of the several consequences of the main, beautiful, result in \cite{DPR}.

Our proof combines Theorem \ref{thm:DPR}, the construction by Mondino-Naber and the Laplacian comparison estimates obtained by the first author in \cite{Gigli12}  along the following lines:
\begin{itemize}
\item[i)] The typical  chart $\varphi$ in Mondino-Naber paper has coordinates which are distance functions from well chosen points, say $x_{1},\ldots,x_n$, and is $(1+\eps)$-biLipschitz on a set which we shall call $U$
\item[ii)] Assuming $\eps$ sufficiently small, it is not hard to see that the vector fields $v_i:=\nabla\sfd(\cdot,x_i)$ are independent on $U$
\item[iii)] The fact that the distance function has measure-valued Laplacian, grants that the $v_i$'s have measure valued divergence
\item[iv)] The differential of $\varphi$ sends the $v_i$'s to vector fields $u_i$  on $\R^n$ and with some algebraic manipulations one can see that ${\rm div}(u_i\varphi_*\mm)$ is still a measure
\item[v)] The fact that $\varphi\restr U$ is biLipschitz gives that the $u_i$'s are independent on $\varphi(U)$ (stated as such, this is not really correct - the precise formulation requires a cut off and an approximation procedure, see the proof of Theorem \ref{thm:main})
\item[vi)] Since vector fields with measure valued divergence are particular cases of 1-dimensional currents, the conclusion comes from Theorem \ref{thm:DPR} applied to the currents $u_i\varphi_*\mm$ and the measure $\varphi_*(\mm\restr U)$.
\end{itemize}
There are a few things that need to be explained/defined in this line of thought: this work will be carried out in Section \ref{se:tools}, while Section \ref{se:main} contains the statement and proof of our main result. 

One feature of our argument, which is basically a consequence of  Theorem \ref{thm:DPR}, is that we can prove \eqref{eq:thesis} for the charts whose coordinates are distance functions, whereas in \cite{Cheeger-Colding97II} their harmonic approximation was used: the only structural property we need is that the Laplacian of the coordinates is a measure.

\bigskip

Let us briefly describe a main consequence of our result. In \cite{Gigli14}, following some ideas of Weaver \cite{Weaver01}, it has been proposed an abstract definition of tangent `bundle' to a metric measure space based on the properties of Sobolev functions. For smooth Riemannian manifolds, this general  notion can trivially be identified with the classical concept of tangent space and thus also with the more geometric concept of pointed-measured-Gromov-Hausdorff limit of rescaled spaces. On the other hand, for general `irregular' spaces the approach in \cite{Gigli14} has little to do with tangent spaces arising as pmGH-limits.

One is therefore lead to look for sufficient regularity conditions on the general metric measure space that ensure the equivalence of these two notions. This has been the scope of our  companion paper \cite{GP16}: there we proved that if a space can be covered with $(1+\eps)$-biLipschitz charts satisfying \eqref{eq:thesis}, then indeed such equivalence is in place. As discussed in \cite{GP16}, the main example of application of our result is the one of $\RCD^*(K,N)$ spaces, where the $(1+\eps)$-biLipschitz charts are given by Mondino-Naber and the absolute continuity property \eqref{eq:thesis} by this manuscript. This is relevant because it opens up the possibility of studying the `concrete and geometric' notion of tangent space as pmGH-limit via the `abstract and analytic' one proposed in  \cite{Gigli14}.

\bigskip

Finally we remark that other two independent recent papers (\cite{DPMR16} and \cite{MK16}) cover results overlapping with ours; let us briefly describe those  and the relations with ours. In \cite{DPR} it has been observed how combining the main results of \cite{DPR} and \cite{AM16} it is possible to deduce a converse of Rademacher theorem, namely that if $\mu$ is a measure on $\R^d$ such that every Lipschitz function is differentiable $\mu$-a.e., then necessarily $\mu\ll\mathcal L^d$. In \cite{DPMR16}, it has then been noticed how this latter result together with the characterization of measures on Lipschitz differentiability spaces obtained by Bate in \cite{Bate15}, implies the validity of Cheeger's conjecture on Lipschitz differentiability spaces, namely the analogous of our main theorem with `Mondino-Naber charts on $\RCD$ spaces' replaced by `charts in a Lipschitz differentiability space'. In \cite{MK16}, among other things, this line of thought has been pushed to obtain our very same theorem on $\RCD$ spaces: the added observation is that Cheeger's results in  \cite{Cheeger00} ensure that $\RCD$ spaces equipped with the Mondino-Naber charts are Lipschitz differentiability spaces.

Despite this overlapping, we believe that our approach has some independent interest: as discussed above, working with the added regularity of $\RCD$ spaces allows us to quickly conclude from Theorem \ref{thm:DPR}, without the need of using also the deep results in  \cite{Cheeger00}, \cite{Bate15} and \cite{AM16}.

\bigskip

\noindent{\bf Acknowledgment }

\noindent This research has been supported by the MIUR SIR-grant `Nonsmooth Differential Geometry' (RBSI147UG4).

\section{Technical tools}\label{se:tools}

\subsection{Some properties of the differential of a map between metric measure spaces}

To keep the presentation short, we assume the reader familiar with the language of $L^\infty$-modules developed in  \cite{Gigli14}. We shall only recall, without proof, those definitions and properties we need.

Let $(X,\sfd_X,\mm_X),(Y,\sfd_Y,\mm_Y)$ be two metric measure spaces and $\varphi:X\to Y$ a map of {\bf bounded compression}, i.e.\ so that $\varphi_*\mm_X\leq C\mm_Y$ for some $C>0$.

Given an $L^2(Y)$-normed module $M$, the pullback $\varphi^*M$, which is an $L^2(X)$-normed module, and the pullback map $\varphi^*:M\to \varphi^*M$, which is linear and continuous, are characterised up to unique isomorphism by the fact that
\[
\begin{split}
|\varphi^*v|&=|v|\circ\varphi,\quad\mm_X-a.e.\ \forall v\in M,\\
\{\varphi^*v&:\ v\in M\}\text{ generates the whole $\varphi^*M$}.
\end{split}
\]
Notice that if $M=L^2(Y)$, then $\varphi^*M= L^2(X)$ with $\varphi^*f= f\circ \varphi$.

Given an $L^2(Y)$-normed module $M$ and its dual $M^*$, there is a unique continuous $L^\infty(X)$-bilinear map from $\varphi^*M\times \varphi^*M^*$ to $L^1(X)$ such that
\begin{equation}
\label{eq:dualpair}
\varphi^*L(\varphi^*v)=L(v)\circ\varphi\qquad\forall v\in M,\ L\in M^*.
\end{equation}
Such duality pairing provides an isometric embedding of $\varphi^*M^*$ into the dual of $\varphi^*M$, but in general such embedding is not surjective. A sufficient condition for surjectivity is that $M^*$ is separable (this has to do with the Radon-Nikodym property of $M^*$).

In the special case in which $M=L^2(T^*Y)$ is the cotangent module of $Y$, we shall denote the pullback map by $\omega\mapsto [\varphi^*\omega]$, to distinguish it by the pullback of 1-forms whose definition we recall in a moment.

\bigskip

Now we assume that not only $\varphi:X\to Y$ is of bounded compression, but also that it is Lipschitz. Maps of this kind are called of \textbf{bounded deformation}. 

Recall that given a map $\varphi:X\to Y$  of bounded deformation, the map from $W^{1,2}(Y)$ to $W^{1,2}(X)$ sending $f$ to $f\circ\varphi$ is linear and continuous, and that it holds
\[
|\d(f\circ\varphi)|\leq \Lip(\varphi)|\d f|\circ\varphi.
\]
It can then be seen that there is a unique linear and continuous map $\varphi^*:L^2(T^*Y)\to L^2(T^*X)$, called pullback of 1-forms, such that
\begin{equation}
\label{eq:defpullb}
\begin{split}
\varphi^*\d f&=\d(f\circ\varphi)\qquad\forall f\in W^{1,2}(Y),\\
\varphi^*(g\omega)&=g\circ\varphi\varphi^*\omega\qquad\forall \omega\in L^2(T^*Y),\ g\in L^\infty(Y),
\end{split}
\end{equation}
and that it also satisfies
\begin{equation}
\label{eq:normpullb}
|\varphi^*\omega|\leq \Lip(\varphi)|\omega|\circ\varphi\qquad\mm_X-a.e..
\end{equation}

Recall that in this setting the tangent module is defined as the dual of the cotangent one; still, to keep consistency with the notation used in the smooth case, the duality pairing between $v\in L^2(TX)$ and $\omega\in L^2(T^*X)$ is denoted by $\omega(v)$.

The differential $\d\varphi$ of $\varphi$ is then defined as follows.
\begin{definition}[The differential of a map of bounded deformation]
Let $\varphi:X\to Y$ be of bounded deformation and assume that $L^2(TY)$ is separable. The differential $\d\varphi:L^2(TX)\to \varphi^*(L^2(TY))$ is the only linear and continuous map such that
\begin{equation}
\label{eq:defdiff}
[\varphi^*\omega](\d\varphi(v))=\varphi^*\omega(v)\qquad\forall \omega\in L^2(T^*Y),\ v\in L^2(TX).
\end{equation}
\end{definition}
The separability assumption on $L^2(TY)$ is needed because  \eqref{eq:defdiff} only defines an element of the dual of $\varphi^*L^2(T^*Y)$ which a priory might be larger than $\varphi^*L^2(TY)$ (recall the duality pairing \eqref{eq:dualpair}).

It turns out that $\d\varphi$ is also $L^\infty(X)$-linear and satisfies
\begin{equation}
\label{eq:bounddiff}
|\d\varphi(v)|\leq\Lip(\varphi)|v|\quad\mm_X-a.e.\qquad\forall v\in L^2(TX).
\end{equation}

\bigskip

Much like in the classical smooth setting, part of the necessity of calling into play the pullback module is due to the fact that $\varphi$ might be not injective, so that one cannot hope to define $\d\varphi(v)(y)\in T_yY$  as $\d\varphi_{\varphi^{-1}(y)}(v(\varphi^{-1}(y)))$, because $\varphi^{-1}(y)$ can contain more than one point. 

A way to assign to each vector field on $X$ a vector field on $Y$ via the differential of $\varphi$ is to, roughly said, take the average of $\d\varphi_{\varphi^{-1}(y)}(v(\varphi^{-1}(y)))$ among all the preimages of $y$. Rigorously, this is achieved by introducing a left inverse $\P_\varphi:\varphi^*M\to M$ of the pullback map $\varphi^*:M\to \varphi^*M$, as we discuss now. 

\bigskip

We shall assume from now on that $\varphi_*\mm_X=\mm_Y$. For $f\in L^p(X)$ non-negative we put
\[
\P_\varphi(f):=\frac{\d\varphi_*(f\mm_X)}{\d\mm_Y}
\]
and for general $f\in L^p(X)$ we put $\P_\varphi(f):=\P_\varphi(f^+)-\P_\varphi(f^-)$. By first checking the cases $p=1,\infty$ it is easy to verify that $\P_\varphi:L^p(X)\to L^p(Y)$ is linear, continuous and satisfies
\[
|\P_\varphi(f)|\leq \P_\varphi(|f|),\qquad\mm_Y-a.e..
\]
In the case of general modules, the map $\P_\varphi:\varphi^*M\to M$ can be characterized as the only linear and continuous map such that 
\begin{equation}
\label{eq:defpv}
\P_\varphi(f\varphi^*v)=\P_\varphi(f)v,\qquad\forall f\in L^\infty(X),\ v\in M,
\end{equation}
and it can be verified that the bound
\[
|\P_\varphi(V)|\leq \P_\varphi(|V|),\quad\mm_Y-a.e.\qquad\forall V\in \varphi^*M
\]
holds. Notice that, analogously to \eqref{eq:defpv}, it also holds
\begin{equation}
\label{eq:altrap}
g\P_\varphi(V)=\P_\varphi(g\circ\varphi V),\qquad\forall g\in L^\infty(Y),\ V\in\varphi^*M.
\end{equation}
Indeed, for given $g$ both sides of this identity are linear and continuous in $V$ and agree on those $V$'s of the form $f\varphi^*v$ for $f\in L^\infty(X)$, $v\in M$.

All these definitions and properties can be found in  \cite{Gigli14}. Now we turn to the main result of this section: we are interested in studying the map $v\mapsto \P_\varphi(\d\varphi(v))$ under the assumption that for some Borel $E\subset X$ the restriction of $\varphi$ to $E$ is invertible and with Lipschitz inverse. 

We shall use the following notation: for a given $L^\infty$-module $M$ and Borel set $E$ we shall denote by $M\restr E$ the set of those $v\in M$ which are concentrated on $E$, i.e.\ such that $\nchi_{E^c}v=0$.

We recall that $v_1,\ldots,v_n\in M$ are said independent on $E$ provided for any $f_1,\ldots,f_n\in L^\infty$ we have
\[
\nchi_E\sum_if_iv_i=0\qquad\Rightarrow\qquad \nchi_E f_i=0\quad\forall i.
\]

In the course of the proof we shall use the identity
\begin{equation}
\label{eq:idp}
\omega(\P_\varphi(V))=\P_\varphi([\varphi^*\omega](V))\qquad\forall \omega\in L^2(T^*Y),\ V\in \varphi^*L^2(TY),
\end{equation}
which can be easily proved  by noticing that for given $\omega\in L^2(T^*Y)$ the two sides define linear continuous maps from $\varphi^*L^2(TY)$ to $L^1(Y)$ which agree on $V$'s of the form $f\varphi^*v$ for $f\in L^\infty(X)$ and $v\in L^2(TY)$.

\begin{proposition}\label{prop:ind}
Let $\varphi:X\to Y$ be of bounded deformation, with $\varphi_*\mm_X=\mm_Y$ and assume that for some Borel set $E\subset X$ we have that $\varphi\restr E$ is injective with $(\varphi\restr E)^{-1}$ Lipschitz. Assume also that Lipschitz functions on $X$ are dense in $W^{1,2}(X)$.

Then the map
\[
L^2(TX)\restr E\ni v\quad\mapsto\quad\P_\varphi(\d\varphi(v))\in L^2(TY)
\]
is injective. 

In particular if $v_1,\ldots,v_n\in L^2(TX)$ are independent on $E$, then the vectors \linebreak $\P_\varphi(\d\varphi(\nchi_Ev_1)), \ldots,\P_\varphi(\d\varphi(\nchi_Ev_n))\in L^2(TY)$ are independent on $\{\P_\varphi(\nchi_E)>0\}\subset Y$.
\end{proposition}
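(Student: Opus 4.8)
The plan is to establish injectivity first and then read off the independence statement from it. For injectivity I would start from the observation that, since Lipschitz functions are dense in $W^{1,2}(X)$, the differentials $\d h$ with $h\in\LIP(X)\cap W^{1,2}(X)$ generate $L^2(T^*X)$; hence to prove that a $v\in L^2(TX)\restr E$ with $\P_\varphi(\d\varphi(v))=0$ vanishes, it is enough to show $\d h(v)=0$ for all such $h$. Pairing the hypothesis with an arbitrary $\omega\in L^2(T^*Y)$ and using \eqref{eq:idp} followed by \eqref{eq:defdiff}, I get
\[
0=\omega\big(\P_\varphi(\d\varphi(v))\big)=\P_\varphi\big([\varphi^*\omega](\d\varphi(v))\big)=\P_\varphi\big(\varphi^*\omega(v)\big),
\]
so that $\P_\varphi(\varphi^*\omega(v))=0$ for every $\omega$.

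Next I would manufacture the right test forms on $Y$. Fixing $h\in\LIP(X)\cap W^{1,2}(X)$, the function $h\circ(\varphi\restr E)^{-1}$ is real-valued and Lipschitz on $\varphi(E)$ (a composition of Lipschitz maps, the inverse being Lipschitz by hypothesis), so by McShane's extension theorem I can extend it to a Lipschitz $\tilde g$ on $Y$ with $\tilde g\circ\varphi=h$ on $E$; after a Lipschitz cut-off with bounded support (exhausting $E$) I may take $\tilde g\in W^{1,2}(Y)$ and set $\omega=\d\tilde g$. By \eqref{eq:defpullb}, $\varphi^*\d\tilde g=\d(\tilde g\circ\varphi)$, and locality of the differential together with $\tilde g\circ\varphi=h$ on $E$ gives $\nchi_E\varphi^*\d\tilde g=\nchi_E\d h$; since $v=\nchi_E v$, the $L^\infty$-bilinearity of the pairing yields $\varphi^*\d\tilde g(v)=\d h(v)$, whence $\P_\varphi(\d h(v))=0$. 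Now $f:=\d h(v)\in L^1(X)$ is concentrated on $E$ and $\P_\varphi(f)=0$ means $\int_{\varphi^{-1}(A)}f\,\d\mm_X=0$ for all Borel $A\subset Y$; because $\varphi\restr E$ is injective with Borel inverse, every Borel subset of $E$ has the form $\varphi^{-1}(A)\cap E$, so $f=0$ $\mm_X$-a.e., i.e.\ $\d h(v)=0$. As $h$ is arbitrary, $v=0$.

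For the independence statement, set $w_i:=\P_\varphi(\d\varphi(\nchi_E v_i))$ and $F:=\{\P_\varphi(\nchi_E)>0\}$, and suppose $\nchi_F\sum_i g_i w_i=0$. Combining the $L^\infty$-linearity of $\d\varphi$, identity \eqref{eq:altrap} and the linearity of $\P_\varphi$, I would rewrite $\sum_i g_i w_i=\P_\varphi(\d\varphi(v))$ with $v:=\nchi_E\sum_i (g_i\circ\varphi)v_i$, which is concentrated on $E$. Since $\P_\varphi$ sends functions concentrated on $E$ to functions concentrated on $F$ (because $\varphi_*(\nchi_E\mm_X)=\P_\varphi(\nchi_E)\mm_Y$ lives on $F$), the bound $|\P_\varphi(\d\varphi(v))|\le\P_\varphi(|\d\varphi(v)|)$ together with $|\d\varphi(v)|\le\Lip(\varphi)\nchi_E|v|$ shows that $\P_\varphi(\d\varphi(v))$ is concentrated on $F$; combined with $\nchi_F\P_\varphi(\d\varphi(v))=0$ this forces $\P_\varphi(\d\varphi(v))=0$. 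Injectivity then gives $v=0$, so $\nchi_E\sum_i(g_i\circ\varphi)v_i=0$, and independence of the $v_i$ on $E$ yields $\nchi_E(g_i\circ\varphi)=0$ for all $i$. Finally, the scalar version of \eqref{eq:altrap} gives $|g_i|\,\P_\varphi(\nchi_E)=\P_\varphi\big((|g_i|\circ\varphi)\nchi_E\big)=0$, hence $|g_i|=0$ on $F$, i.e.\ $\nchi_F g_i=0$, which is the asserted independence.

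I expect the main obstacle to be the test-form construction in the second step: producing, from a Lipschitz $h$ on $X$, an honest element of $W^{1,2}(Y)$ whose pullback agrees with $\d h$ precisely on $E$. This is exactly where both hypotheses enter crucially—the Lipschitz inverse of $\varphi\restr E$ to transport $h$ and McShane's theorem to extend it, and the density of Lipschitz functions to reduce to such $h$—and it must be paired with the measure-theoretic upgrade, via injectivity of $\varphi\restr E$, of $\P_\varphi(f)=0$ into $f=0$ on $E$.
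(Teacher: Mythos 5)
Your proposal is correct and follows essentially the same route as the paper's proof: reduction via \eqref{eq:idp} and \eqref{eq:defdiff} to testing $\varphi^*\omega(v)$ against pullbacks of differentials, a McShane extension of $h\circ(\varphi\restr E)^{-1}$ (using the Lipschitz inverse) to manufacture those test forms, an upgrade of $\P_\varphi(f)=0$ to $f=0$ for $f$ concentrated on $E$ via injectivity of $\varphi\restr E$, and the same \eqref{eq:altrap} manipulation for the independence claim. The only differences are presentational: the paper first reduces to compact $E$ by inner regularity of $\mm_X$, which disposes of your glossed cut-off/exhaustion step for unbounded $E$, and it proves the vanishing upgrade by integrating against ${\rm sign}(f\circ(\varphi\restr E)^{-1})$ rather than by your (equally valid) set-theoretic argument that Borel subsets of $E$ are of the form $\varphi^{-1}(A)\cap E$.
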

\begin{proof} By inner regularity of $\mm_X$ we can, and will, assume that $E$ is compact.
The assumption that Lipschitz functions on $X$ are dense in $W^{1,2}(X)$ grants  that $\{\d f:f\in\LIP\cap W^{1,2}(X) \}$ is dense in  $\{\d f:f\in  W^{1,2}(X)\}$ w.r.t.\ the $L^2(T^*X)$ topology. Recalling that $L^2(T^*X)$ is generated by the differentials of functions in $W^{1,2}(X)$ we therefore deduce that
\begin{equation}
\label{eq:vdenso}
V:=\Big\{\nchi_E\sum_{i=1}^nh_i\d f_i:\ n\in\N,\ f_i\in \LIP\cap W^{1,2}(X), h_i\in L^\infty(X)\Big\}\qquad\text{ is dense in }L^2(T^*X)\restr{E}.
\end{equation}
Now let $f\in  \LIP\cap W^{1,2}(X)$, consider the Lipschitz function $f\circ(\varphi\restr E)^{-1}$ defined on $\varphi(E)$ and extend it to a Lipschitz function $g$ on $Y$ with bounded support. Then  $g\in W^{1,2}(Y)$ and $g\circ\varphi =f$ on $E$. This identity and the locality of the differential (see  \cite{Gigli14}) imply that $\nchi_E\d f=\nchi_E\d(g\circ\varphi)$ so that taking into account  the first in \eqref{eq:defpullb} we  have
\[
\nchi_E\d f=\nchi_E\d (g\circ\varphi)=\nchi_E\,\varphi^*\d g \quad \in\quad \nchi_E({\rm Im }\,\varphi^*).
\]
Since the second in \eqref{eq:defpullb} and the assumption about the invertibility of $\varphi\restr E$ ensure that $\nchi_E({\rm Im }\,\varphi^*)$ is closed under $L^\infty(Y)$-linear combinations, we deduce that $V\subset  \nchi_E({\rm Im }\,\varphi^*)$, which together with \eqref{eq:vdenso} implies
\begin{equation}
\label{eq:imdensa}
\nchi_E ({\rm Im }\,\varphi^*)\quad\text{ is dense in }L^2(T^*X)\restr{E}.
\end{equation}

Next, we claim that
\begin{equation}
\label{eq:p0}
f\in L^1(X)\restr E\text{ and } \P_\varphi(f)=0\qquad\Rightarrow\qquad f=0.
\end{equation}
This can be seen by letting $g\in L^1(Y)$ be defined as ${\rm sign}(f\circ\varphi\restr E^{-1})$ on $\varphi(E)$ and $0$ outside. Then it holds 
\[
0=\int g\P_\varphi(f)\,\d\mm_Y=\int g\d\varphi_*(f\mm_X)=\int g\circ\varphi f\,\d\mm_X=\int |f|\,\d\mm_X.
\]

The injectivity claim now follows noticing that for  $v\in L^2(TX)\restr E$ we have
\[
\begin{split}
\P_\varphi(\d\varphi (v))=0\qquad&\Leftrightarrow\qquad \omega\big(\P_\varphi(\d\varphi (v))\big)=0\qquad \forall \omega\in L^2(T^*Y)\\
\text{(by \eqref{eq:idp})}\qquad\qquad&\Leftrightarrow\qquad \P_\varphi\big([\varphi^*\omega](\d\varphi (v))\big)=0\qquad \forall \omega\in L^2(T^*Y)\\
\text{(by \eqref{eq:p0} and $v\in L^2(TX)\restr E$)}\qquad\qquad&\Leftrightarrow\qquad  [\varphi^*\omega](\d\varphi (v))=0\qquad \forall \omega\in L^2(T^*Y)\\
\text{(by \eqref{eq:defdiff})}\qquad\qquad&\Leftrightarrow\qquad  \varphi^*\omega(v)=0\qquad \forall \omega\in L^2(T^*Y)\\
\text{(by \eqref{eq:imdensa} and $v\in L^2(TX)\restr E$)}\qquad\qquad&\Leftrightarrow\qquad  v=0.
\end{split}
\]
For the last claim simply observe that for $f_i\in L^\infty(Y)$ we have
\[
\begin{split}
\sum_if_i\P_\varphi(\d\varphi(\nchi_Ev_i))\stackrel{\eqref{eq:altrap}}=\sum_i\P_\varphi(f_i\circ\varphi\,\d\varphi(\nchi_Ev_i))=\P_\varphi\Big(\d\varphi\big(\nchi_E\sum_if_i\circ\varphi \,v_i\big)\Big)
\end{split}
\]
and therefore 
\[
\begin{split}
\sum_if_i&\P_\varphi(\d\varphi(\nchi_Ev_i))=0\quad \\
&\Leftrightarrow\quad \nchi_E\sum_if_i\circ\varphi \,v_i=0\qquad\text{ by the injectivity just proved}\\
&\Leftrightarrow\quad f_i\circ \varphi =0\quad\mm_X\restr E-a.e.\quad \forall i\qquad\text{ by the independence of $(v_i)$ on $E$ }\\
&\Leftrightarrow\quad f_i =0\quad\varphi_*(\mm_X\restr E)-a.e.\quad\forall i\\
&\Leftrightarrow\quad f_i =0\quad\mm_Y-a.e.\ on\ \{\P_\varphi(\nchi_E)>0\}\quad\forall i,
\end{split}
\]
which is the thesis.
\end{proof}
\begin{remark}{\rm
Given inequality \eqref{eq:bounddiff} and taking into account the weighting given by the operator $\P_\varphi$, one might expect that under the assumptions of the previous proposition, not only the stated injectivity holds, but actually that the quantitative bound
\[
|\P_\varphi(\d\varphi(v))|\leq \Lip\big((\varphi\restr E)^{-1}\big)\,\P_\varphi(\nchi_E)\circ\varphi\,|v|
\]
holds. Yet, this is not clear: the problem is that we don't know whether $(\varphi\restr E)^{-1}$ can be extended to a map of bounded deformation.
}\fr\end{remark}

\subsection{Measure valued divergence}
Here we discuss the notion of measure valued divergence, mimicking the one of measure valued Laplacian given in  \cite{Gigli12}. For an earlier approach to this sort of definition see \cite{Gigli-Mondino12}. The definition and results presented are valid on arbitrary metric measure spaces $(X,\sfd,\mm)$ so that $(X,\sfd)$ is proper (although this can in fact be relaxed) and $\mm$ a non-negative Radon measure. In particular, the measure valued divergence is always a linear operator (unlike the Laplacian, whose linearity requires the infinitesimal Hilbertianity assumption).

\begin{definition}[Measure valued divergence]\label{def:mdiv}
Let $\Omega\subset X$ be open and $v\in L^2(TX)$. We say that $v$ has measure valued divergence in $\Omega$, and write $v\in D(\bold{div}_\mm,\Omega)$ if there exists a Radon measure $\mu$ on $\Omega$ such that 
\[
\int \d f(v)\,\d\mm=-\int f\,\d\mu
\]
for every Lipschitz function $f$ with  support compact and contained in $\Omega$. In this case the measure $\mu$, which is clearly unique, will be denoted  $\bold{div}_\mm\restr\Omega(v)$.

In the case $\Omega=X$ we shall simply write $D(\bold{div}_\mm)$ and $\bold{div}_\mm(v)$.
\end{definition}
We have the following two simple basic calculus rules for the divergence, both being consequences of the Leibniz rule for the differential.
\begin{proposition}[Leibniz rule]\label{prop:leibdiv}
Let $v\in D(\bold{div}_\mm,\Omega)$ and $g:X\to\R$ Lipschitz and bounded. Then $gv\in D(\bold{div}_\mm,\Omega)$ and
\[
\bold{div}_\mm\restr{\Omega}(gv)=g\bold{div}_\mm\restr{\Omega}(v)+\d g(v)\mm\restr{\Omega}.
\]
\end{proposition}
\begin{proof}
Observe that for $f:X\to\R$ Lipschitz with support compact and contained in $\Omega$ it holds
\[
-\int f\,\d\big(g\bold{div}_\mm\restr{\Omega}(v)+\d g(v)\mm\restr{\Omega}\big)=\int \d(fg)(v)-f\d g(v)\,\d\mm=\int \d f(gv)\,\d\mm,
\]
which is the thesis.
\end{proof}

\begin{proposition}[Locality]\label{prop:localdiv}
Let $\Omega_1,\Omega_2\subset X$ open and $v\in D(\bold{div}_\mm,\Omega_1)\cap D(\bold{div}_\mm,\Omega_1)$. Then 
\begin{equation}
\label{eq:local}
\big(\bold{div}_\mm\restr{\Omega_1}(v)\big)\restr{\Omega_1\cap\Omega_2}=\big(\bold{div}_\mm\restr{\Omega_2}(v)\big)\restr{\Omega_1\cap\Omega_2},
\end{equation}
$v\in D(\bold{div}_\mm,\Omega_1\cup\Omega_2)$ and it holds
\begin{equation}
\label{eq:union}
\big(\bold{div}_\mm\restr{\Omega_1\cup\Omega_2}(v)\big)\restr{\Omega_i}=\bold{div}_\mm\restr{\Omega_i}(v)\qquad i=1,2.
\end{equation}
\end{proposition}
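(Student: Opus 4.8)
The plan is to prove the two assertions \eqref{eq:local} and \eqref{eq:union} directly from the defining integral identity in Definition \ref{def:mdiv}, exploiting the fact that the test functions are exactly those Lipschitz functions with compact support contained in the relevant open set. The guiding principle throughout is that whether a given test function ``sees'' $\Omega_1$, $\Omega_2$, or their intersection is entirely controlled by where its support sits.

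First I would prove the compatibility identity \eqref{eq:local}. Let $f$ be Lipschitz with compact support contained in $\Omega_1\cap\Omega_2$. Then $f$ is simultaneously an admissible test function for the divergence in $\Omega_1$ and for the divergence in $\Omega_2$; applying the defining identity twice gives
\[
-\int f\,\d\big(\bold{div}_\mm\restr{\Omega_1}(v)\big)=\int \d f(v)\,\d\mm=-\int f\,\d\big(\bold{div}_\mm\restr{\Omega_2}(v)\big).
\]
Since this holds for every such $f$, the two measures $\bold{div}_\mm\restr{\Omega_1}(v)$ and $\bold{div}_\mm\restr{\Omega_2}(v)$ agree when both are restricted to the open set $\Omega_1\cap\Omega_2$, which is exactly \eqref{eq:local}. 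Here the uniqueness remark in the definition (a Radon measure is determined by its integrals against compactly supported Lipschitz functions) is what lets me pass from equality of integrals to equality of the restricted measures.

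The main work is in \eqref{eq:union}, where I must first produce a candidate measure $\mu$ on $\Omega_1\cup\Omega_2$ witnessing $v\in D(\bold{div}_\mm,\Omega_1\cup\Omega_2)$. The natural candidate is the measure that equals $\bold{div}_\mm\restr{\Omega_i}(v)$ on each $\Omega_i$; \eqref{eq:local} guarantees these two prescriptions are consistent on the overlap, so they glue to a well-defined Radon measure $\mu$ on $\Omega_1\cup\Omega_2$. To verify that $\mu$ is the divergence, I would take an arbitrary test function $f$ with compact support $K\subset\Omega_1\cup\Omega_2$ and reduce to the already-known local identities by a partition of unity: since $K$ is compact and covered by the two open sets $\Omega_1,\Omega_2$, I can choose Lipschitz cutoffs $\eta_1,\eta_2$ with $\eta_1+\eta_2=1$ on $K$ and $\supp(\eta_i f)$ compact in $\Omega_i$. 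Then $f=\eta_1 f+\eta_2 f$, and applying the defining identity for $\bold{div}_\mm\restr{\Omega_i}(v)$ to each piece $\eta_i f$ yields $\int \d(\eta_i f)(v)\,\d\mm=-\int \eta_i f\,\d\big(\bold{div}_\mm\restr{\Omega_i}(v)\big)=-\int \eta_i f\,\d\mu$; summing gives $\int\d f(v)\,\d\mm=-\int f\,\d\mu$, which is precisely the divergence identity for $v$ on $\Omega_1\cup\Omega_2$. The restriction formula \eqref{eq:union} then follows because, by construction, $\mu$ coincides with $\bold{div}_\mm\restr{\Omega_i}(v)$ on $\Omega_i$.

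I expect the only genuinely delicate point to be the existence of a Lipschitz partition of unity subordinate to $\{\Omega_1,\Omega_2\}$ on the compact set $K$ with the supports of $\eta_i f$ compactly contained in $\Omega_i$; this is standard in a proper (indeed any) metric space using distance functions to the complements of the $\Omega_i$, but it is the step that must be set up carefully so that each $\eta_i f$ is a legitimate test function for the respective local divergence. Everything else is a direct substitution into the definition together with the uniqueness of the representing measure.
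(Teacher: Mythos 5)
Your proposal is correct and follows essentially the same route as the paper's proof: for \eqref{eq:local} you test against Lipschitz functions compactly supported in $\Omega_1\cap\Omega_2$ and invoke their density in $C_c(\Omega_1\cap\Omega_2)$, and for \eqref{eq:union} you glue the two measures (consistently, thanks to \eqref{eq:local}) and verify the defining identity via a Lipschitz partition of unity subordinate to $\{\Omega_1,\Omega_2\}$ on $\supp(f)$. The only cosmetic difference is that the paper phrases the final cancellation as $\d(\nchi_1+\nchi_2)=\d 1=0$, which is exactly your step of summing the identities for $\eta_1 f$ and $\eta_2 f$.
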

\begin{proof}
To prove \eqref{eq:local} it is sufficient to consider Lipschitz functions with support in $\Omega_1\cap\Omega_2$, which are dense in $C_c(\Omega_1\cap\Omega_2)$, in the definition of  $\bold{div}_\mm\restr{\Omega_1}(v),\bold{div}_\mm\restr{\Omega_2}(v)$. For \eqref{eq:union} let $f:X\to\R$ be Lipschitz with support compact and contained in $\Omega:=\Omega_1\cup\Omega_2$ and $\nchi_1,\nchi_2:X\to[0,1]$ a Lipschitz partition of the unit of the space $\supp(f)$ subordinate to the cover $\{\Omega_1,\Omega_2\}$. Then letting $\mu$ be the measure defined by \eqref{eq:union} we have that
\[
\begin{split}
-\int f\,\d\mu&=-\int f\nchi_1\,\d\bold{div}_\mm\restr{\Omega_1}(v)-\int f\nchi_2\,\d \bold{div}_\mm\restr{\Omega_2}(v)\\
&=\int \big(\d(f\nchi_1)+\d(f\nchi_2)\big)(v)\,\d\mm=\int \d f(v)\,\d\mm,
\end{split}
\]
having used the fact that $\d(\nchi_1+\nchi_2)=\d 1=0$.
\end{proof}
Finally, we point out how the measure valued divergence is transformed under maps of bounded deformation:
\begin{proposition}\label{prop:pfdiv}
Let $\varphi:X\to Y$ be proper (=preimage of compact is compact) and of bounded deformation and such that $\mm_Y=\varphi_*\mm_X$. Then for any $v\in L^2(TX)$ and $f\in W^{1,2}(Y)$ we have
\[
\int \d f({\sf Pr}_\varphi(\d\varphi(v)))\,\d\mm_Y=\int \d(f\circ\varphi)(v)\,\d\mm_X.
\]
In particular, if $v\in D({\bold{ div}_{\mm_X}})$, then ${\sf Pr}_\varphi(\d\varphi(v))\in D({\bold{ div}}_{\mm_Y})$ and
\[
\bold{div}_{\mm_Y}\big({\sf Pr}_\varphi(\d\varphi(v))\big)=\varphi_*\big(\bold{div}_{\mm_X}(v)\big).
\]

\end{proposition}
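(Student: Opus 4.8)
The plan is to reduce the left-hand integrand to an expression of the form $\P_\varphi(\,\cdot\,)$ applied to a function living on $X$, and then to pass from $\mm_Y$ to $\mm_X$ using that $\P_\varphi$ preserves total integrals. Concretely, fixing $v\in L^2(TX)$ and $f\in W^{1,2}(Y)$, I would chain the three defining relations at the pointwise level. Applying \eqref{eq:idp} with $\omega=\d f$ and $V=\d\varphi(v)\in\varphi^*L^2(TY)$ gives
\[
\d f\big(\P_\varphi(\d\varphi(v))\big)=\P_\varphi\big([\varphi^*\d f](\d\varphi(v))\big)\qquad\mm_Y\text{-a.e.}
\]
I would then rewrite the inner pairing via the definition \eqref{eq:defdiff} of the differential, which turns $[\varphi^*\d f](\d\varphi(v))$ into $\varphi^*\d f(v)$, and via the first identity in \eqref{eq:defpullb}, which identifies the pulled-back exact form as $\varphi^*\d f=\d(f\circ\varphi)$. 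Together these yield the pointwise identity $\d f(\P_\varphi(\d\varphi(v)))=\P_\varphi(\d(f\circ\varphi)(v))$, whose argument $\d(f\circ\varphi)(v)$ lies in $L^1(X)$.

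Next I would integrate. The one extra ingredient is that $\int\P_\varphi(h)\,\d\mm_Y=\int h\,\d\mm_X$ for every $h\in L^1(X)$: for $h\geq0$ this follows immediately by testing the defining relation $\P_\varphi(h)\,\mm_Y=\varphi_*(h\mm_X)$ against the constant $1$ and using $\varphi_*\mm_X=\mm_Y$, and the general case comes from $h=h^+-h^-$. Applying this with $h=\d(f\circ\varphi)(v)$ produces the asserted identity
\[
\int\d f\big(\P_\varphi(\d\varphi(v))\big)\,\d\mm_Y=\int\d(f\circ\varphi)(v)\,\d\mm_X.
\]

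For the divergence statement I would let $v\in D(\bold{div}_{\mm_X})$ and take $f:Y\to\R$ Lipschitz with compact support; such $f$ lies in $W^{1,2}(Y)$ because $\mm_Y$ is Radon, so the identity above applies. Properness together with the Lipschitz character of $\varphi$ makes $f\circ\varphi$ Lipschitz with compact support in $X$, hence an admissible test function for $\bold{div}_{\mm_X}(v)$ in Definition \ref{def:mdiv}. Feeding this in and then changing variables in the push-forward gives
\[
\int\d f\big(\P_\varphi(\d\varphi(v))\big)\,\d\mm_Y=\int\d(f\circ\varphi)(v)\,\d\mm_X=-\int f\circ\varphi\,\d\bold{div}_{\mm_X}(v)=-\int f\,\d\varphi_*\big(\bold{div}_{\mm_X}(v)\big).
\]
Since the push-forward of a Radon measure under a proper continuous map is again Radon, this equality for all such $f$ is precisely the statement that $\P_\varphi(\d\varphi(v))\in D(\bold{div}_{\mm_Y})$ with divergence $\varphi_*(\bold{div}_{\mm_X}(v))$.

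I do not expect a serious obstacle: the argument is essentially bookkeeping with the defining relations. The point that demands care is keeping the two pullbacks distinct—the cotangent-module pullback $[\varphi^*\cdot]$ appearing in \eqref{eq:idp} and \eqref{eq:defdiff} versus the pullback of $1$-forms $\varphi^*$ in \eqref{eq:defpullb}—and using that they agree on exact forms. The other mild subtlety is the role of properness, which is exactly what guarantees that test functions on $Y$ pull back to test functions on $X$ and that the push-forward measure stays Radon.
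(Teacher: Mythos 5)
Your proof is correct and follows essentially the same route as the paper's: you chain \eqref{eq:idp}, \eqref{eq:defdiff} and the first identity in \eqref{eq:defpullb} to obtain the pointwise identity $\d f(\P_\varphi(\d\varphi(v)))=\P_\varphi\big(\d(f\circ\varphi)(v)\big)$, integrate using $\int\P_\varphi(h)\,\d\mm_Y=\int h\,\d\mm_X$, and then test against compactly supported Lipschitz functions on $Y$, with properness guaranteeing that $f\circ\varphi$ is an admissible test function for $\bold{div}_{\mm_X}(v)$. The only difference is that you make explicit a couple of points the paper treats as trivial (the integral identity for $\P_\varphi$ and the role of properness), which is fine.
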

\begin{proof} Pick $f:X\to\R$ Lipschitz with compact support. Recalling \eqref{eq:idp} and the definition of $\d\varphi(v)$ we have
\[
 \d f({\sf Pr}_\varphi(\d\varphi(v)))={\sf Pr}_\varphi([\varphi^*\d f](\d\varphi(v)))={\sf Pr}_\varphi(\d(f\circ\varphi)(v)).
\]
Integrating w.r.t.\ $\mm_Y=\varphi_*\mm_X$ and using the trivial identity $\int\P_\varphi(g)\,\d\mm_Y=\int g\,\d\mm_X$ valid for any $g\in L^1(X)$ we deduce
\[
\int \d f({\sf Pr}_\varphi(\d\varphi(v)))\,\d\varphi_*\mm_X=\int \d(f\circ\varphi)(v)\,\d\mm_X=-\int f\circ\varphi\,\d{\bold{div}_{\mm_X}}(v)=-\int f\,\d\varphi_*{\bold{div}_{\mm_X}}(v),
\]
which, by the arbitrariness of $f$, is the thesis.
\end{proof}

\subsection{About (co)vector fields on weighted $\R^d$}

Let us consider the Euclidean space $\R^d$ equipped with a non-negative Radon measure $\mu$. Here we have at least two ways of speaking about, say, $L^2(\mu)$ vector fields: one is simply to consider the space $L^2(\R^d,\R^d;\mu)$ of $L^2(\mu)$-maps from $\R^d$ to itself, the other is via the abstract notion of tangent module, which we shall denote as $L^2_\mu(T\R^d)$. 

Such two spaces are in general different, as can be seen by considering the case of $\mu$ being a Dirac delta: in this case $L^2(\R^d,\R^d;\mu)$ has dimension $d$ while $L^2_\mu(T\R^d)$ reduces to the 0 space. Aim of this section is to show that $L^2_\mu(T\R^d)$ always canonically and isometrically embeds in $L^2(\R^d,\R^d;\mu)$. This is useful because once we have such `concrete' representations of vector fields in $L^2_\mu(T\R^d)$,  we will be able to canonically associate 1-currents to them. As we shall see at the end of the section, the fact that this current is normal is essentially equivalent to the fact that the original vector field had measure valued divergence in the sense of Definition \ref{def:mdiv}.

A word on notation: to distinguish between the classically defined differential and the one coming from the theory of modules, we shall denote the former by $\ud f$, while keeping $\d f$ for the latter. More generally, elements of $L^2(\R^d,\R^d;\mu)$ or $L^2(\R^d,(\R^d)^*;\mu)$ will typically be underlined, while those of $L^2_\mu(T\R^d)$, $L^2_\mu(T^*\R^d)$ will be not.

\bigskip

Consider the set $V\subset  L^2(\R^d,(\R^d)^*;\mu)$ defined by
\[
V:=\Big\{\sum_{i=1}^n\nchi_{A_i}\ud f_i\ :\ n\in\N,\ (A_i)\text{ disjoint Borel subsets of }\R^d,\ f_i\in C^1_c(\R^d)\Big\}
\]
and define $P:V\to L^2_\mu(T^*\R^d)$ by
\[
P\Big(\sum_{i=1}^n\nchi_{A_i}\ud f_i\Big):=\sum_{i=1}^n\nchi_{A_i}\d f_i.
\]
We have the following simple result:
\begin{proposition}
The map $P$ is well defined and uniquely extends to a linear continuous map, still denoted by $P$, from $L^2(\R^d,(\R^d)^*;\mu)$ to $L^2_\mu(T^*\R^d)$. Such extension is a $L^\infty$-module morphism and satisfies
\begin{equation}
\label{eq:leq}
|P(\underline\omega)|\leq |\underline\omega|,\quad\mu-a.e.\qquad\forall \underline\omega\in L^2(\R^d,(\R^d)^*;\mu).
\end{equation}
\end{proposition}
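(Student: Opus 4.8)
The plan is to reduce the whole statement to a single pointwise estimate on the generating set $V$, and then to obtain well-definedness, continuity, the bound \eqref{eq:leq} and the module-morphism property by soft functional-analytic arguments. Concretely, I would first establish that for every $\underline\omega=\sum_{i=1}^n\nchi_{A_i}\ud f_i\in V$ one has
\[
\Big|\sum_{i=1}^n\nchi_{A_i}\d f_i\Big|\le\Big|\sum_{i=1}^n\nchi_{A_i}\ud f_i\Big|\qquad\mu\text{-a.e.},
\]
and then bootstrap from there.

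For this pointwise estimate I would exploit the disjointness of the $A_i$. By locality of the pointwise norm in an $L^\infty$-module and the fact that the $\nchi_{A_i}\d f_i$ have pairwise disjoint supports, $\mu$-a.e.\ one has $|\sum_i\nchi_{A_i}\d f_i|=\sum_i\nchi_{A_i}|\d f_i|$, and likewise $|\sum_i\nchi_{A_i}\ud f_i|=\sum_i\nchi_{A_i}|\nabla f_i|$. Hence the inequality is equivalent to $|\d f_i|\le|\nabla f_i|$ $\mu$-a.e.\ for each $i$, i.e.\ to the single-function statement: for $f\in C^1_c(\R^d)$ the pointwise norm $|\d f|$ of the abstract differential is dominated $\mu$-a.e.\ by the Euclidean norm $|\nabla f|=|\ud f|$ of the classical one. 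This holds because $|\d f|$ is the minimal weak upper gradient of $f$, whereas the slope $\lip(f)$, which for a $C^1$ function equals $|\nabla f|$, is an upper gradient and therefore bounds it from above $\mu$-a.e. I regard this as the conceptual crux: it is the only place where the bridge between the classical Euclidean structure and the abstract module structure is genuinely used.

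Granting the estimate, I would then argue as follows. Well-definedness: $V$ is a linear subspace of $L^2(\R^d,(\R^d)^*;\mu)$, closed under sums and scalar multiples after passing to a common refinement of the partitions; if two representations define the same element of $L^2(\mu)$, their difference, written in canonical form $\sum_k\nchi_{C_k}\ud h_k$ with $h_k\in C^1_c$ and using that the abstract differential mirrors the linearity and locality of the classical one, has zero concrete norm, so the estimate forces its image to vanish and the two module outputs coincide. Integrating the pointwise bound gives $\|P(\underline\omega)\|\le\|\underline\omega\|$, so $P$ is linear and continuous on $V$. Since $V$ is dense in $L^2(\R^d,(\R^d)^*;\mu)$ — any field is approximated by compactly supported simple fields $\sum_i\nchi_{A_i}c_i$ with $A_i$ bounded, and each $\nchi_{A_i}c_i$ lies in $V$ upon choosing $f_i\in C^1_c$ with $\nabla f_i=c_i$ on a neighbourhood of $\overline{A_i}$ — the map extends uniquely to a continuous linear map on all of $L^2(\R^d,(\R^d)^*;\mu)$, and \eqref{eq:leq} survives the extension by extracting a subsequence along which both sides converge $\mu$-a.e. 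Finally, for the $L^\infty$-morphism property I would check $P(\nchi_B\underline\omega)=\nchi_B P(\underline\omega)$ for $\underline\omega\in V$ (immediate, as multiplication by $\nchi_B$ merely intersects the $A_i$ with $B$), extend to simple $g$ by linearity, to general $g\in L^\infty(\mu)$ by uniform approximation, and to general $\underline\omega$ by density and continuity of $P$.

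The main obstacle is the single-function estimate $|\d f|\le|\nabla f|$: it carries the only substantive content, linking the classical slope to the module norm, while well-definedness, continuity, the norm bound and $L^\infty$-linearity are routine. The two points deserving a little care are the density of $V$ in $L^2(\R^d,(\R^d)^*;\mu)$ and the passage of the $\mu$-a.e.\ inequality to the limit under the extension.
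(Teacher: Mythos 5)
Your proposal is correct and takes essentially the same route as the paper: everything is reduced to the single pointwise inequality $|\d f|\leq|\ud f|=\lip(f)$ $\mu$-a.e.\ for $f\in C^1_c(\R^d)$ (which the paper cites from \cite{AmbrosioGigliSavare11-3} and you justify via the minimal weak upper gradient being dominated by the slope), after which well-definedness, continuity, the extension by density of $V$, and the $L^\infty$-morphism property via simple functions and approximation follow exactly as in the paper. Your write-up merely makes explicit details the paper leaves implicit (common refinement of partitions, density of $V$ via locally constant fields, and passing \eqref{eq:leq} to the limit along an a.e.-convergent subsequence), all of which are sound.
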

\begin{proof} The trivial inequality 
\[
|\d f|\leq |\ud f|=\lip(f)\quad\mu-a.e.,
\]
valid for every $f\in C^1_c(\R^d)$ (see e.g. \cite{AmbrosioGigliSavare11-3}) grants that $P$ is well defined and that the bound \eqref{eq:leq} for $\underline\omega\in V$ holds. Such bound also ensures that $P$ is continuous and since, as is obvious,  $V$ is a dense vector subspace of $L^2(\R^d,(\R^d)^*;\mu)$, we get existence and uniqueness of the continuous extension, which is also clearly linear. The fact that such extension is a $L^\infty$-modules morphism can be checked by first noticing that by definition $P$ behaves properly w.r.t.\ multiplication by simple functions and then arguing by approximation. 
\end{proof}

By duality we can then define a map $\iota:L^2_\mu(T\R^d)\to L^2(\R^d,\R^d;\mu)\sim L^2(\R^d,(\R^d)^*;\mu)^*$ by:
\[
\underline\omega(\iota(v)):=P(\underline\omega)(v),\qquad\forall v\in L^2_\mu(T\R^d),\ \underline\omega\in L^2(\R^d,(\R^d)^*;\mu).
\]
Then the bound \eqref{eq:leq} directly gives
\begin{equation}
\label{eq:leq2}
|\iota(v)|\leq |v|,\quad\mu-a.e.\qquad\forall v\in L^2_\mu(T\R^d).
\end{equation}
We want to prove that equality holds here, i.e. that $\iota$ is actually an isometric embedding. We shall obtain this by proving that $P$ is a quotient map, more specifically that it is  surjective and such that
\begin{equation}
\label{eq:pquot}
|\omega|=\min_{\underline\omega\in P^{-1}(\omega)}|\underline\omega|,\quad\mu-a.e.\qquad\forall \omega\in L^2_\mu(T^*\R^d).
\end{equation}
We shall need the following lemma about the structure of Sobolev spaces over weighted $\R^d$:
\begin{lemma}\label{le:apc1}
Let $f\in W^{1,2}(\R^d,\sfd_{\rm Eucl},\mu)$. Then there exists a sequence $(f_n)\subset C^1_c(\R^d)$ converging to $f$ in $L^2(\mu)$ such that $|\ud f_n|\to |\d f|$ in $L^2(\mu)$.
\end{lemma}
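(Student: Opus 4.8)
The plan is to prove the (formally stronger but in fact equivalent) statement that one can find $(f_n)\subset C^1_c(\R^d)$ with $f_n\to f$ in $L^2(\mu)$ and $\limsup_n\int|\ud f_n|^2\,\d\mu\le\int|\d f|^2\,\d\mu$, and then to upgrade this to the claimed convergence $|\ud f_n|\to|\d f|$ in $L^2(\mu)$ by a soft Hilbert-space argument. Indeed, for such a sequence the functions $|\ud f_n|=\lip(f_n)$ are bounded in $L^2(\mu)$, so up to a subsequence $|\ud f_n|\rightharpoonup G$ weakly; the lower semicontinuity of the Cheeger energy (any weak $L^2(\mu)$-limit of upper gradients of an $L^2(\mu)$-convergent sequence is a weak upper gradient of the limit) gives $G\ge|\d f|$ $\mu$-a.e., whence
\[
\int|\d f|^2\,\d\mu\le\int G^2\,\d\mu\le\liminf_n\int|\ud f_n|^2\,\d\mu\le\limsup_n\int|\ud f_n|^2\,\d\mu\le\int|\d f|^2\,\d\mu.
\]
All inequalities are therefore equalities, forcing $G=|\d f|$ $\mu$-a.e.\ and convergence of the $L^2(\mu)$-norms; weak convergence together with convergence of norms in the Hilbert space $L^2(\mu)$ yields strong convergence, and since the limit is independent of the subsequence the whole sequence converges.

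To build such approximants, first I would invoke the density in energy of Lipschitz functions proved in \cite{AmbrosioGigliSavare11-3}: there exist Lipschitz functions $g_k\to f$ in $L^2(\mu)$ whose asymptotic Lipschitz constants $\lip_a(g_k)$ converge to $|\d f|$ in $L^2(\mu)$. Multiplying each $g_k$ by a smooth cutoff $\eta_R$ equal to $1$ on $B_R$ and supported in $B_{2R}$, the Leibniz rule for $\ud$ together with the tail estimate for $\int_{B_{2R}\setminus B_R}|g_k|^2\,\d\mu$ shows that for $R$ large $\eta_R g_k$ is a bounded-support Lipschitz function close to $g_k$ in $L^2(\mu)$ and in energy; hence by a diagonal argument it suffices to approximate a single compactly supported Lipschitz $g$ by functions $h\in C^1_c(\R^d)$ with $h\to g$ in $L^2(\mu)$ and $\limsup\int|\ud h|^2\,\d\mu\le\int\lip_a(g)^2\,\d\mu$.

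For this last step I would mollify: set $h_\eps:=g*\rho_\eps$ for a standard mollifier $\rho_\eps$. Then $h_\eps\in C^\infty_c(\R^d)\subset C^1_c(\R^d)$, and since $g$ is uniformly continuous with compact support $h_\eps\to g$ uniformly, hence in $L^2(\mu)$ (all the relevant supports lie in a fixed compact set, on which $\mu$ is finite). Moreover $\ud h_\eps=(\ud g)*\rho_\eps$, so using $|\ud g|\le\lip_a(g)$ Lebesgue-a.e.\ (Rademacher) one gets the pointwise bound $|\ud h_\eps(x)|\le\int\lip_a(g)(x-y)\rho_\eps(y)\,\d y$. The asymptotic Lipschitz constant $\lip_a(g)$ is upper semicontinuous, so $\limsup_{\eps\to0}|\ud h_\eps(x)|\le\lip_a(g)(x)$ for every $x$; as the $|\ud h_\eps|$ are uniformly bounded by $\Lip(g)$ and supported in a fixed compact set of finite $\mu$-measure, the reverse Fatou lemma gives $\limsup_{\eps\to0}\int|\ud h_\eps|^2\,\d\mu\le\int\lip_a(g)^2\,\d\mu$, as required.

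The main obstacle is precisely this measure-theoretic mismatch in the mollification step: gradients of mollifications converge only Lebesgue-a.e.\ (and in $L^2(\mathcal L^d)$), whereas $\mu$ is arbitrary and may charge Lebesgue-null sets, so one cannot hope for $\ud h_\eps\to\ud g$ in $L^2(\mu)$. The key point is that one never needs such convergence: it suffices to obtain the one-sided energy bound, and this is exactly what the upper semicontinuity of $\lip_a$ delivers through reverse Fatou. Verifying that upper semicontinuity, and checking the cutoff and diagonalization bookkeeping, are the only genuinely technical parts.
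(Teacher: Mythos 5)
Your proposal is correct and follows essentially the same route as the paper's proof: the density-in-energy theorem of \cite{AmbrosioGigliSavare11-3}, mollification of a compactly supported Lipschitz function $g$, the pointwise bound $\lims_{\eps\downarrow0}|\ud (g*\rho_\eps)|(x)\leq \lip_a(g)(x)$ (which the paper obtains by the direct estimate $|\ud(g*\rho_n)|(x)\leq \Lip(g\restr{B_r(x)})$ for $\frac1n<r$, equivalent to your Rademacher-plus-upper-semicontinuity argument), and reverse Fatou on a fixed compact set of finite $\mu$-measure; your final weak-compactness/lower-semicontinuity/Radon--Riesz upgrade is exactly what the paper compresses into the phrase ``by a diagonalization argument''. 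Two cosmetic points: the result of \cite{AmbrosioGigliSavare11-3} as invoked in the paper already produces Lipschitz approximants with compact support, so your cutoff step is redundant, and within that step the sub-Leibniz estimate should be phrased for $\lip_a$ (namely $\lip_a(\eta_R g)\leq \eta_R\,\lip_a(g)+|g|\,\lip_a(\eta_R)$, valid at every point) rather than for $\ud$, which exists only $\mathcal L^d$-a.e.\ and hence gives no information against a possibly Lebesgue-singular $\mu$.
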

\begin{proof}
It is known from  \cite{AmbrosioGigliSavare11-3} that for any $f\in W^{1,2}(\R^d,\sfd_{\rm Eucl},\mu)$ there exists a sequence $(f_n)$  of Lipschitz and compactly supported functions on $\R^d$ such that $(f_n) ,\lip_a(f_n)$ converge to $f,|\d f|$ respectively in $L^2(\mu)$. Here $\lip_a(g)$ is the asymptotic Lipschitz constant defined as
\[
\lip_a(g)(x):=\lims_{y,z\to x}\frac{|g(y)-g(z)|}{|y-z|}=\lim_{r\downarrow 0}\Lip(g\restr{B_r(x)})=\inf_{r> 0}\Lip(g\restr{B_r(x)}).
\]
Therefore by a diagonalization argument to conclude it is sufficient to show that given $f$ Lipschitz with compact support we can find $(f_n)\subset C^1_c(\R^d)$ uniformly Lipschitz, converging to $f$ in $L^2(\mu)$ and such that 
\begin{equation}
\label{eq:c1}
\lims_{n}|\ud f_n|(x)\leq \lip_af(x),\qquad\forall x\in\R^d.
\end{equation}
To this aim we simply define $f_n:=f*\rho_n\in C^1_c(\R^d)$, where $(\rho_n)$ is a standard family of mollifiers such that $\supp(\rho_n)\subset B_{1/n}(0)$. It is trivial that $(f_n)$ converges to $f$ in $L^2(\mu)$, that the family is equiLipschitz (the global Lipschitz constant being bounded by that of $f$) and that for $\frac1n<r$ it holds 
\[
|\ud f_n|(x)\leq \Lip(f\restr{B_r(x)}),\qquad\forall x\in\R^d.
\]
Letting first $n\to\infty$ and then $r\downarrow0$ we get \eqref{eq:c1} and the conclusion.
\end{proof}
Thanks to this approximation result, we obtain the following:
\begin{proposition}\label{thm:isomemb}
The map $P:L^2(\R^d,(\R^d)^*;\mu)\to L^2_\mu(T^*\R^d)$ is surjective and satisfies \eqref{eq:pquot} and the map $\iota:L^2_\mu(T\R^d)\to L^2(\R^d,\R^d;\mu)$ is an $L^\infty$-module morphism preserving the pointwise  norm, i.e.
\begin{equation}
\label{eq:iso}
|\iota(v)|= |v|,\quad\mu-a.e.\qquad\forall v\in L^2_\mu(T\R^d).
\end{equation}
In particular, if $v_1,\ldots,v_n\in L^2_\mu(T\R^d)$ are independent on $E\subset \R^d$, then $\iota(v_1)(x),\ldots,\iota(v_n)(x)\in\R^d$ are independent for $\mu$-a.e.\ $x\in E$.
\end{proposition}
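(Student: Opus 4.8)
The plan is to prove the three claims in turn, the decisive step being the surjectivity of $P$ together with the quotient formula \eqref{eq:pquot}; once these are in hand, the isometry \eqref{eq:iso} and the independence statement follow quickly. I first record a reduction: since \eqref{eq:leq} already gives $|\omega|\le|\underline\omega|$ $\mu$-a.e.\ for every $\underline\omega\in P^{-1}(\omega)$, it suffices to exhibit, for each $\omega\in L^2_\mu(T^*\R^d)$, a single preimage $\underline\omega$ with $|\underline\omega|=|\omega|$ $\mu$-a.e. Such a $\underline\omega$ simultaneously witnesses surjectivity, attains the pointwise minimum in \eqref{eq:pquot}, and pins its value to $|\omega|$.

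The heart of the matter, and the main obstacle, is the case $\omega=\d f$ with $f\in W^{1,2}(\R^d,\sfd_{\rm Eucl},\mu)$: the obvious candidate $\ud f$ fails because $|\d f|$ may be strictly smaller than $|\ud f|=\lip(f)$, so the concrete differential is not norm-optimal, and this is exactly where Lemma \ref{le:apc1} is needed. Choosing $(f_n)\subset C^1_c(\R^d)$ with $f_n\to f$ in $L^2(\mu)$ and $|\ud f_n|\to|\d f|$ in $L^2(\mu)$, the elements $\underline\omega_n:=\ud f_n$ are bounded in the Hilbert space $L^2(\R^d,(\R^d)^*;\mu)$, so I extract a weak limit $\underline\omega_n\weakto\underline\omega$. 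As $P$ is bounded and linear it is weakly continuous, whence $\d f_n=P(\underline\omega_n)\weakto P(\underline\omega)$ in $L^2_\mu(T^*\R^d)$; since the graph of the differential is a norm-closed, hence weakly closed, linear subspace and $f_n\to f$, the weak limit must be $\d f$, giving $P(\underline\omega)=\d f$. Finally weak lower semicontinuity of the norm yields $\|\underline\omega\|\le\lim_n\|\ud f_n\|=\|\d f\|$, while \eqref{eq:leq} gives $\|\d f\|\le\|\underline\omega\|$; thus $\|\underline\omega\|=\|\d f\|$, and combined with the pointwise bound $|\d f|\le|\underline\omega|$ this forces $|\underline\omega|=|\d f|$ $\mu$-a.e.

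To reach a general $\omega$, let $\mathcal G$ be the set of covector fields admitting an isometric preimage. By the previous step $\mathcal G$ contains every $\d f$. Since $P$ is an $L^\infty$-module morphism, for $\underline\omega_i$ isometric preimages of $\d f_i$ the field $\underline\omega=\sum_i\nchi_{A_i}\underline\omega_i$ satisfies $P(\underline\omega)=\sum_i\nchi_{A_i}\d f_i$ and, the $(A_i)$ being disjoint, $|\underline\omega|=\sum_i\nchi_{A_i}|\d f_i|=|\sum_i\nchi_{A_i}\d f_i|$; hence $\mathcal G$ contains all simple combinations, which are dense in $L^2_\mu(T^*\R^d)$. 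Moreover $\mathcal G$ is closed under strong limits: if $\omega_k\to\omega$ with isometric preimages $\underline\omega_k$, then $\|\underline\omega_k\|=\|\omega_k\|$ stays bounded, a weak subsequential limit $\underline\omega$ satisfies $P(\underline\omega)=\omega$ by weak continuity of $P$, and $\|\underline\omega\|\le\liminf_k\|\omega_k\|=\|\omega\|$ forces $|\underline\omega|=|\omega|$ $\mu$-a.e.\ as before. Thus $\mathcal G=L^2_\mu(T^*\R^d)$, proving surjectivity and \eqref{eq:pquot}.

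For \eqref{eq:iso}, given $v\in L^2_\mu(T\R^d)$ I choose a norming covector $\omega$ with $\omega(v)=|v|^2$ and $|\omega|=|v|$ $\mu$-a.e.\ (available from the duality of modules in \cite{Gigli14}) and lift it, via \eqref{eq:pquot}, to $\underline\omega\in P^{-1}(\omega)$ with $|\underline\omega|=|v|$. Then $\underline\omega(\iota(v))=P(\underline\omega)(v)=\omega(v)=|v|^2$, so the pointwise Cauchy--Schwarz inequality in $L^2(\R^d,(\R^d)^*;\mu)$ gives $|v|^2\le|\underline\omega|\,|\iota(v)|=|v|\,|\iota(v)|$, i.e.\ $|v|\le|\iota(v)|$ $\mu$-a.e.; with \eqref{eq:leq2} this is \eqref{eq:iso}. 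That $\iota$ is an $L^\infty$-module morphism follows directly from its defining relation and the same property of $P$. For the last assertion, suppose $\iota(v_1),\ldots,\iota(v_n)$ were pointwise linearly dependent on a Borel set $F\subset E$ with $\mu(F)>0$. A measurable selection of a unit element of the nontrivial kernel yields $g_1,\ldots,g_n\in L^\infty$, supported in $F$ and not all $\mu$-a.e.\ zero, with $\sum_i g_i\,\iota(v_i)=0$. Since $\iota$ is a morphism, $\iota(\sum_i g_i v_i)=\sum_i g_i\iota(v_i)=0$, and as $\iota$ preserves the pointwise norm this gives $\sum_i g_i v_i=0$ in $L^2_\mu(T\R^d)$. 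Because the $g_i$ are supported in $F\subset E$, the independence of $(v_i)$ on $E$ forces $g_i=0$ for every $i$, contradicting the choice of the $g_i$.
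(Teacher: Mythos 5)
Your treatment of the surjectivity of $P$ and of \eqref{eq:pquot} is essentially the paper's own argument: the same use of Lemma \ref{le:apc1}, weak compactness of $(\ud f_n)$ in $L^2(\R^d,(\R^d)^*;\mu)$, weak continuity of $P$ combined with the closedness of the differential to identify $P(\underline\omega)=\d f$, then the $L^\infty$-morphism property to handle simple combinations $\sum_i\nchi_{A_i}\d f_i$, and a second weak-limit argument for general $\omega$. Your bookkeeping of the pointwise equality (comparing $L^2$-norms and playing them against the pointwise bound \eqref{eq:leq}) is a harmless variant of the paper's pointwise estimate, and your measurable-selection argument for the final independence claim is a correct expansion of what the paper dismisses as ``now obvious''.

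The one genuine gap is in your proof of \eqref{eq:iso}: you invoke, for a given $v\in L^2_\mu(T\R^d)$, a covector $\omega$ with $\omega(v)=|v|^2$ and $|\omega|=|v|$ $\mu$-a.e., attributing its existence to the duality theory of \cite{Gigli14}. That theory defines $L^2_\mu(T\R^d)$ as the dual of the cotangent module and characterizes the pointwise dual norm as an essential supremum, but it does \emph{not} provide exact pointwise attainment: for a general $L^2$-normed module such a norming element need not exist (think of a module with $\ell^1$-type fibers, whose dual has $\ell^\infty$-type fibers containing elements whose norm is approximated but never attained in the pairing), and at this stage you may not assume the module is Hilbertian --- no infinitesimal Hilbertianity of $(\R^d,\sfd_{\rm Eucl},\mu)$ for arbitrary Radon $\mu$ is available within the paper's toolkit, and the section deliberately works without it. The step is easily repaired in either of two ways: argue in integrated form as the paper does, choosing for each $\eps>0$ some $\omega$ with $\||\omega|\|_{L^2(\mu)}=1$ and $\int\omega(v)\,\d\mu\geq\|v\|_{L^2(\mu)}-\eps$ (this uses only the definition of the dual norm), lifting it isometrically via \eqref{eq:pquot} to get $\|\iota(v)\|_{L^2(\mu)}\geq\|v\|_{L^2(\mu)}$, which together with the pointwise inequality \eqref{eq:leq2} forces \eqref{eq:iso}; or keep your pointwise scheme but replace the single norming covector by a countable family $(\omega_n)$ with $|\omega_n|\leq|v|$ and $\sup_n\omega_n(v)=|v|^2$ $\mu$-a.e.\ (this does follow from the essential-supremum characterization of the dual pointwise norm), applying your Cauchy--Schwarz estimate to each $n$ and taking the supremum. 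With that substitution your proof is complete and coincides in substance with the paper's.
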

\begin{proof}
We start showing that for $f\in W^{1,2}(\R^d,\sfd_{\rm Eucl},\mu)$ we have that $\d f$ belongs to the range of $P$. To this aim, let $(f_n)\subset C^1_c(\R^d)$ be as in Lemma \ref{le:apc1} and notice that such lemma grants that $(\ud f_n)$ is a bounded sequence in $L^2(\R^d,(\R^d)^*;\mu)$. Being such space reflexive,  up to pass to a non-relabeled subsequence we can assume that $\ud f_n\weakto \underline\omega$ for some $\underline\omega\in L^2(\R^d,(\R^d)^*;\mu)$.

Being $P$ linear and continuous we know that $\d f_n=P(\ud f_n)\weakto P(\underline\omega)$ in $L^2_\mu(T^*\R^d)$ and this fact together with the  closure of the differential (see \cite{Gigli14}) grants that $\d f=P(\underline\omega)$, thus giving the claim.

Lemma \ref{le:apc1} grants that $|\ud f_n|\to |\d f|$ in $L^2(\mu)$ and it is easy to check that this grants $|\underline\omega|\leq |\d f|$ $\mu$-a.e., so that we have 
\begin{equation}
\label{eq:chain}
|\d f|=|P(\underline\omega)|\leq |\underline \omega|\leq |\d f|,\qquad\mu-a.e.,
\end{equation}
which forces the equalities and thus  shows that \eqref{eq:pquot} holds for $\omega:=\d f$. 

Since $P$ is a $L^\infty$-module morphism, we deduce that any $\omega$ of the form $\sum_{i=1}^n\nchi_{A_i}\d f_i$ is in the image of $P$ and that for such $\omega$'s the identity \eqref{eq:pquot} holds. To conclude for the first part of the statement, pick $\omega\in L^2_\mu(T^*\R^d)$ and a sequence $(\omega_n)\subset  L^2_\mu(T^*\R^d)$ of finite $L^\infty$-linear combinations of differentials. By what we just proved there are $\underline \omega_n\in P^{-1}(\omega_n)$ realizing the equality in \eqref{eq:pquot}. In particular, $(\underline \omega_n)$ is a bounded sequence in $L^2(\R^d,(\R^d)^*;\mu)$ and thus up to pass to a subsequence it weakly converges to some $\underline\omega$. It is clear that $P(\underline\omega)=\omega$ and, arguing as before, that $|\omega|=|\underline\omega|$ $\mu$-a.e..

We turn to the second part of the statement. The fact that $\iota$ is a $L^\infty$-module morphism is obvious by definition. Now pick $v\in L^2_\mu(T\R^d)$, $\eps>0$ and find  $\omega\in L^2_\mu(T^*\R^d)$ with $\||\omega|\|_{L^2(\mu)}=1$ and $\int \omega(v)\,\d\mu\geq \|v\|_{L^2(\mu)}-\eps$. Then use what previously proved to find $\underline\omega\in L^2(\R^d,(\R^d)^*;\mu)$ with $|\underline\omega|=|\omega|$ $\mu$-a.e.\ (in particular, $\||\underline\omega|\|_{L^2(\mu)}=1$) and $P(\underline\omega)=\omega$. We  have
\[
\|\iota(v)\|_{L^2(\mu)}\geq \int \underline\omega(\iota(v))\,\d\mu=\int P(\underline\omega)(v)\,\d\mu=\int\omega(v)\,\d\mu\geq \|v\|_{L^2(\mu)}-\eps,
\]
which by the arbitrariness of $\eps$ and the inequality \eqref{eq:leq2} is sufficient to conclude.

The last claim is now obvious.
\end{proof}
\begin{remark}{\rm
In the proof of Theorem \ref{thm:isomemb} we did not use the fact the distance on $\R^d$ was the Euclidean one: the same conclusion holds by endowing it with the distance coming from any norm. 
}\fr\end{remark}

Now that we embedded $L^2_\mu(T\R^d)$ into $L^2(\R^d,\R^d;\mu)$ we can further proceed by associating to each vector field $v\in L^2_\mu(T\R^d)$ the current $\mathcal I(v)$ whose action on the smooth, compactly supported one form $\underline\omega$ is
\[
\la\mathcal I(v),\underline\omega\ra:=\int \underline\omega(\iota (v))\,\d\mu=\int P(\underline\omega)(v)\,\d\mu.
\]
It is clear that $\mathcal I(v)$ has locally finite mass and that, since $\iota$ preserves the pointwise norm, the mass measure $\|\mathcal I(v)\|$ is given by $|v|\mu$.

By definition, the boundary of $\mathcal I(v)$ acts on $f\in C^\infty_c(\R^d)$ as
\[
\la\partial\mathcal I(v),f\ra:=\la\mathcal I(v),\ud f\ra=\int \ud f(\iota (v))\,\d\mu=\int \d f(v)\,\d\mu.
\]
By looking at the third expression in this chain of equalities we see that $\partial\mathcal I(v)$ has locally finite mass (=is a Radon measure) if and only if the distributional divergence of $\iota(v)\mu$ is a measure and in this case such measure coincides with $-\partial\mathcal I(v)$. Looking at the fourth and last term, instead, and comparing it with Definition \ref{def:mdiv} we see the following:
\begin{corollary}\label{co:curr}
Let $v\in L^2_\mu(T\R^d)$ be with compact support. Then $\mathcal I(v)$ is a normal current if and only if $v\in D(\bold{div}_\mu)$ and in this case
\[
\partial\mathcal I(v)=-\bold{div}_\mu(v).
\]
\end{corollary}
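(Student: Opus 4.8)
The plan is to reduce the statement to the characterization of $0$-currents of finite mass and then match the resulting identity with Definition \ref{def:mdiv}. First I would record that $\mathcal I(v)$ always has finite mass: since its mass measure is $|v|\mu$ and $v$ has compact support and lies in $L^2_\mu(T\R^d)$, the Cauchy--Schwarz inequality together with the local finiteness of $\mu$ gives $\int|v|\,\d\mu<\infty$. Hence $\mathcal I(v)$ is normal if and only if its boundary $\partial\mathcal I(v)$ has finite mass, and the whole problem is concentrated on the $0$-current $\partial\mathcal I(v)$.

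Next I would invoke the standard fact that a $0$-dimensional current has finite mass if and only if it is represented by a signed measure of finite total variation (Riesz representation applied to the bounded functional it induces on $C_c(\R^d)$). Combined with the identity derived in the discussion preceding the statement,
\[
\la\partial\mathcal I(v),f\ra=\int\d f(v)\,\d\mu\qquad\forall f\in C^\infty_c(\R^d),
\]
this turns the question into whether the functional $f\mapsto\int\d f(v)\,\d\mu$ is represented by a finite measure on smooth test functions, which should correspond to $v\in D(\bold{div}_\mu)$.

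For the implication starting from $v\in D(\bold{div}_\mu)$, since $C^\infty_c(\R^d)\subset\LIP_c(\R^d)$, Definition \ref{def:mdiv} directly gives $\int\d f(v)\,\d\mu=-\int f\,\d\bold{div}_\mu(v)$ for every $f\in C^\infty_c(\R^d)$; testing with functions supported away from $\supp v$ shows $\bold{div}_\mu(v)$ is concentrated on the compact set $\supp v$ and is therefore finite, so $\partial\mathcal I(v)=-\bold{div}_\mu(v)$ has finite mass and $\mathcal I(v)$ is normal. For the converse, if $\mathcal I(v)$ is normal then $\partial\mathcal I(v)$ is a finite signed measure $\sigma$, and I would need to upgrade the identity $\int\d f(v)\,\d\mu=\int f\,\d\sigma$ from $f\in C^\infty_c$ to all $f\in\LIP_c$ in order to match Definition \ref{def:mdiv} and read off $\bold{div}_\mu(v)=-\sigma=-\partial\mathcal I(v)$.

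This last upgrade is the main obstacle, as it is a genuine question of interchanging the test-function class (smooth, as seen by currents, versus Lipschitz, as in Definition \ref{def:mdiv}). I would handle it by mollification: given $f\in\LIP_c$ set $f_n:=f*\rho_n\in C^\infty_c$. On one hand $\int f_n\,\d\sigma\to\int f\,\d\sigma$ by uniform convergence on a fixed compact set. On the other hand, the argument of Lemma \ref{le:apc1} shows that $(\ud f_n)$ is bounded in $L^2(\R^d,(\R^d)^*;\mu)$ with $\d f_n=P(\ud f_n)\weakto\d f$ in $L^2_\mu(T^*\R^d)$; pairing this weak convergence with the fixed $v\in L^2_\mu(T\R^d)$, which induces a bounded linear functional on $L^2_\mu(T^*\R^d)$, yields $\int\d f_n(v)\,\d\mu\to\int\d f(v)\,\d\mu$. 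Passing to the limit in the identity $\int\d f_n(v)\,\d\mu=\int f_n\,\d\sigma$ then gives $\int\d f(v)\,\d\mu=\int f\,\d\sigma$ for all $f\in\LIP_c$, which is precisely the defining property of $v\in D(\bold{div}_\mu)$ with $\bold{div}_\mu(v)=-\partial\mathcal I(v)$, completing the plan.
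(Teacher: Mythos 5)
Your proposal is correct and follows essentially the same route as the paper, which deduces the corollary directly from the chain of identities $\la\partial\mathcal I(v),f\ra=\int\ud f(\iota(v))\,\d\mu=\int\d f(v)\,\d\mu$ established in the discussion immediately preceding the statement. Your only addition is the mollification argument upgrading the boundary identity from $f\in C^\infty_c(\R^d)$ to compactly supported Lipschitz $f$ so as to match Definition \ref{def:mdiv} exactly; the paper treats this smooth-versus-Lipschitz point as implicit, and your Lemma \ref{le:apc1}-style argument (uniform Lipschitz bounds, weak compactness of $(\ud f_n)$ and closure of the differential, paired against the fixed $v\in L^2_\mu(T\R^d)$) fills it in correctly.
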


\section{Statement and proof of the main result}\label{se:main}

Let us start collecting the known results we shall use. The first is a simple statement concerning the minimal weak upper gradient of the distance function. Here and in the following, given $x\in X$ we shall denote by $\sfd_x$ the function $y\mapsto\sfd(x,y)$.
\begin{proposition}
Let $(X,\sfd,\mm)$ be a $\RCD^*(K,N)$ space, $N<\infty$. Then for every $x\in X$ we have
\begin{equation}
\label{eq:norm1}
|\d\,\sfd_x|=1,\quad\mm-a.e..
\end{equation}
\end{proposition}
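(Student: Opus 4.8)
The plan is to prove the two inequalities $|\d\,\sfd_x|\le 1$ and $|\d\,\sfd_x|\ge 1$ separately, the first being soft and the second encoding the geometry of the space. For the upper bound I would simply use that $\sfd_x$ is $1$-Lipschitz: by the general estimate $|\d f|\le\lip_a(f)\le\Lip(f)$, valid for any Lipschitz $f$ because the asymptotic Lipschitz constant is always a weak upper gradient (see \cite{AmbrosioGigliSavare11-3}), one immediately gets $|\d\,\sfd_x|\le\Lip(\sfd_x)=1$ $\mm$-a.e.

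For the lower bound the strategy is to first compute the local Lipschitz constant (slope) of $\sfd_x$ and then to identify it with the minimal weak upper gradient. Fix $y\ne x$. Since a finite-dimensional $\RCD^*(K,N)$ space is proper, complete and a length space, hence geodesic, there is a unit-speed geodesic $\gamma:[0,L]\to X$ from $x$ to $y$, with $L:=\sfd(x,y)>0$. For $s<L$ the point $z:=\gamma_s$ satisfies $\sfd_x(z)=s$ and $\sfd(z,y)=L-s$, so that $|\sfd_x(y)-\sfd_x(z)|=L-s=\sfd(z,y)$; letting $s\uparrow L$, hence $z\to y$, gives $\lip(\sfd_x)(y)\ge 1$, and together with the $1$-Lipschitz bound we conclude $\lip(\sfd_x)(y)=1$ for every $y\ne x$. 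Since $\mm$ is non-atomic (the space being non-trivial), $\mm(\{x\})=0$, and therefore $\lip(\sfd_x)=1$ holds $\mm$-a.e.

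It then remains to pass from the slope to $|\d\,\sfd_x|$. Here I would invoke that a finite-dimensional $\RCD^*(K,N)$ space is locally uniformly doubling and supports a local $(1,2)$-Poincar\'e inequality, so that it is a PI space; by Cheeger's differentiation theorem \cite{Cheeger00} the minimal weak upper gradient of a Lipschitz function agrees $\mm$-a.e.\ with its local Lipschitz constant. Applied to $\sfd_x$ this gives $|\d\,\sfd_x|=\lip(\sfd_x)=1$ $\mm$-a.e., which is the claim.

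The main obstacle is precisely this last step, i.e.\ the reverse inequality $|\d\,\sfd_x|\ge\lip(\sfd_x)$: the upper bound and the slope computation are purely metric and elementary, whereas the coincidence of the minimal weak upper gradient with the slope fails on general metric measure spaces and genuinely uses the doubling/Poincar\'e structure. Should one prefer to avoid citing \cite{Cheeger00}, the same lower bound can be obtained by a direct test-plan argument, estimating $\int|\sfd_x(\gamma_1)-\sfd_x(\gamma_0)|\,\d\ppi$ from below along $W_2$-geodesics joining two spread-out measures and using the $\CD(K,N)$ condition to produce the density bounds that turn these geodesics into test plans of bounded compression; I expect the Cheeger route to be the more economical one in the present context.
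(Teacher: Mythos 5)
Your proposal is correct and follows essentially the same route as the paper's primary argument: the paper likewise combines the doubling property \cite{Sturm06II} and the $1$--$2$ Poincar\'e inequality \cite{Rajala12} with the observation that, the space being geodesic, the local Lipschitz constant of $\sfd_x$ is identically $1$, and then concludes via Cheeger's results \cite{Cheeger00}. Even your closing remark about an alternative optimal-transport argument mirrors the paper's second proof, which uses that $\tfrac12\sfd_x^2$ is a Kantorovich potential together with the metric Brenier theorem of \cite{AmbrosioGigliSavare11} and the density bounds of \cite{GigliRajalaSturm13}.
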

\begin{proof}
Recall that $\RCD^*(K,N)$ is doubling and supporting a 1-2 weak Poincar\'e inequality (\cite{Sturm06II},\cite{Rajala12}), that the local Lipschitz constant of $\sfd_x$ is identically 1 (because the space is geodesic) and conclude applying Cheeger's results in \cite{Cheeger00}.

An alternative argument which does not use the results in \cite{Cheeger00} but relies instead on the additional regularity of both the space and the function considered goes as follows. The function $\frac12\sfd_x^2$ is $c$-concave and thus a Kantorovich potential from any chosen measure $\mu_0$ and some measure $\mu_1$ depending on $\mu_0$. Picking $\mu_0\leq C\mm$ for some $C>0$ and with bounded support, by the results in \cite{GigliRajalaSturm13} we know that the only geodesic $(\mu_t)$ from $\mu_0$ to $\mu_1$ is such that $\mu_t\leq C'\mm$  for any $t\in[0,\frac12]$. Thus we can apply the metric Brenier theorem (see Theorem 10.3 in \cite{AmbrosioGigliSavare11}) to deduce that $|\d\frac{\sfd_x^2}{2}|$ coincides $\mm$-a.e.\ with the upper slope of $\frac{\sfd_x^2}{2}$. Since  $(X,\sfd,\mm)$ is doubling,  the upper slope coincides $\mm$-a.e.\ with the lower one (see Proposition 2.7 in \cite{AmbrosioGigliSavare11}) and being $(X,\sfd)$ geodesic, the latter is easily seen to be identically $\sfd_x$ by direct computation. Thus we know that $|\d\frac{\sfd_x^2}{2}|=\sfd_x$ $\mm$-a.e.\ and the conclusion follows from the chain rule.
\end{proof}
Next, we recall the main result of Mondino-Naber in the form we shall use, in particular making explicit some of the ingredients that we will need:
\begin{theorem}\label{thm:MN}
Let $(X,\sfd,\mm)$ be a $\RCD^*(K,N)$ space. Then there are disjoint Borel sets $A_i\subset X$, $i=1,\ldots,n$ with $n\leq N$ covering $\mm$-a.e.\ $X$ such that the following holds.

For every $i=1,\ldots, n$ and $\eps>0$ there is a countable disjoint collection $(U^\eps_{i,j})_{j\in\N}$ of Borel  subsets of $A_i$ covering $\mm$-a.e.\ $A_i$ and, for every $j\in\N$, points $x^\eps_{i,j,k}$ with $k=1,\ldots, i$, such that
\begin{equation}
\label{eq:scalp}
|\langle\nabla\sfd_{x^\eps_{i,j,k}},\nabla\sfd_{x^\eps_{i,j,k'}}\rangle|\leq \eps\qquad \mm-a.e.\ on\ U^\eps_{i,j},\qquad\forall k\neq k'
\end{equation}
and so that the map $\varphi^\eps_{i,j}:X\to\R^i$ given by $\varphi^\eps_{i,j}(x):=(\sfd_{x^\eps_{i,j,1}}(x),\ldots,\sfd_{x^\eps_{i,j,i}}(x))$ satisfies
\begin{equation}
\label{eq:bil}
\varphi^\eps_{i,j}\restr{U^\eps_{i,j}}:U^\eps_{i,j}\quad\to\quad \varphi^\eps_{i,j}(U^\eps_{i,j})\qquad \text{ is $(1+\eps)$-biLipschitz}.
\end{equation}
\end{theorem}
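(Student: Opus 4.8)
Strictly speaking this is the rectifiability theorem of Mondino-Naber and the plan is to invoke \cite{Mondino-Naber14}; let me nonetheless outline how one would prove it. The whole argument is a blow-up analysis: first one understands the measured Gromov-Hausdorff tangent cones of $(X,\sfd,\mm)$ and shows that at $\mm$-a.e.\ point the tangent is Euclidean, and then one uses a quantitative version of this at small but positive scales to manufacture the distance-function charts.

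First I would set up the dimensional decomposition. Exploiting the compactness and stability of the $\RCD^*(K,N)$ condition under pmGH convergence, the fact that tangent cones are metric measure cones, and the splitting theorem, a dimension-reduction argument shows that at $\mm$-a.e.\ $x$ at least one tangent cone is isometric to some $\R^{n(x)}$ with $n(x)\le N$. Setting $A_i:=\{x: n(x)=i\}$ yields the disjoint Borel sets $A_1,\dots,A_n$ covering $\mm$-a.e.\ $X$.

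Second, and this is the technical core, I would fix $x\in A_i$ together with a small scale $r$ at which $B_r(x)$ is Gromov-Hausdorff close to a Euclidean ball. The almost-splitting theorem then produces points $x_1,\dots,x_i$ whose distance functions $\sfd_{x_k}$ play the role of the coordinates of the splitting: in an averaged (integral) sense their gradients are almost unit, almost pairwise orthogonal, and of small Hessian. A maximal-function truncation turns the averaged orthogonality into the pointwise bound \eqref{eq:scalp} on a subset $U$ of large measure. For the map $\varphi=(\sfd_{x_1},\dots,\sfd_{x_i})$ the Lipschitz bound is immediate, each coordinate being $1$-Lipschitz; the content is the matching lower bound. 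To get it I would integrate $\langle\nabla\sfd_{x_k},\dot\gamma\rangle$ along geodesics $\gamma$ joining pairs of points of $U$: almost-orthonormality forces $\sum_k\langle\nabla\sfd_{x_k},\dot\gamma\rangle^2\approx 1$, while smallness of the Hessians makes each $\langle\nabla\sfd_{x_k},\dot\gamma\rangle$ nearly constant along $\gamma$, so that Cauchy-Schwarz becomes an approximate equality and $|\varphi(y)-\varphi(z)|\ge(1-\eps)\,\sfd(y,z)$. Confining via the Cheeger-Colding segment inequality the geodesics along which these estimates degenerate to a set of small measure yields the $(1+\eps)$-biLipschitz property \eqref{eq:bil} on a slightly smaller $U$.

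Finally I would localize. Within each $A_i$, letting the scale range over a countable admissible family and the base and auxiliary points vary, a Vitali-type covering argument extracts the countable disjoint collection $(U^\eps_{i,j})_j$ covering $\mm$-a.e.\ $A_i$ on which \eqref{eq:scalp} and \eqref{eq:bil} hold simultaneously. The hard part throughout is the biLipschitz lower bound: upgrading the averaged orthonormality and Hessian control supplied by the almost-splitting theorem to a genuine pointwise two-sided distance estimate is exactly where the segment inequality and the maximal-function machinery are indispensable, and it is the reason the charts cover only $\mm$-a.e.\ $X$ and exist only at suitably chosen scales.
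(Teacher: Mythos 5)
Your outline is correct as a summary of Mondino--Naber's proof, but it misallocates the effort relative to what this theorem actually requires. The paper, like you, does not re-prove the rectifiability theorem: it quotes \cite[Theorem 6.5]{Mondino-Naber14} wholesale for the decomposition into $A_i$'s, the sets $U^\eps_{i,j}$, the distance-function coordinates and the biLipschitz property \eqref{eq:bil}. The entire content of the paper's proof is the one claim that is \emph{not} quotable from \cite{Mondino-Naber14}, namely \eqref{eq:scalp}, which appears there only implicitly --- and this is precisely the step your sketch compresses into the single phrase ``a maximal-function truncation turns the averaged orthogonality into the pointwise bound''. That phrase is not accurate as stated: what Mondino--Naber's maximal function controls is not the scalar products $\la\nabla\sfd_{x_k},\nabla\sfd_{x_{k'}}\ra$, nor any averaged orthogonality of gradients, but the averaged quantity $\big|\d\big(\tfrac{\sfd_{x_k}+\sfd_{x_{k'}}}{\sqrt 2}-\sfd_{x_{k,k'}}\big)\big|^2$ over all balls $B_{r'}(x)$ with $r'<r$, where the $x_{k,k'}$ are auxiliary points (the ``$p_i+p_j$'' of \cite{Mondino-Naber14}) constructed together with the $x_k$'s; moreover the truncation is already built into the definition of the sets ($U^k_{\eps_1,\delta_1}\subset\{M^k\leq\eps_1\}$), so no further truncation is available or needed.

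To pass from that bound to \eqref{eq:scalp} the paper needs three ingredients absent from your proposal: (i) the Lebesgue differentiation theorem for the doubling measure $\mm$, to upgrade the bound on averages at all scales $r'<r$ to a pointwise $\mm$-a.e.\ bound on $U^\eps_{i,j}$; (ii) the identity $|\d\,\sfd_x|=1$ $\mm$-a.e.\ of \eqref{eq:norm1}, which on an $\RCD^*(K,N)$ space is itself a proposition requiring proof (the paper gives two arguments, via Cheeger's theory and via the metric Brenier theorem combined with \cite{GigliRajalaSturm13}), not a triviality one can assume; and (iii) a polarization computation resting on (ii),
\begin{equation*}
|\la\nabla\sfd_{x_1},\nabla\sfd_{x_2}\ra|
=\Big|\big|\d\big(\tfrac{\sfd_{x_1}+\sfd_{x_2}}{\sqrt 2}\big)\big|^2-|\d\sfd_y|^2\Big|
\leq(\sqrt 2+1)\,\big|\d\big(\tfrac{\sfd_{x_1}+\sfd_{x_2}}{\sqrt 2}\big)-\d\sfd_y\big|,
\end{equation*}
together with the quantitative choice $\eps_1\leq\big|\tfrac{\eps}{\sqrt 2+1}\big|^2$, a choice which feeds back into the construction of the sets $U^\eps_{i,j}$ and the points $x^\eps_{i,j,k}$. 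So while your blow-up/almost-splitting/segment-inequality narrative is a fair account of \eqref{eq:bil} (for which citing \cite{Mondino-Naber14} suffices), a proof of the theorem as stated must supply (i)--(iii) explicitly; as written, your argument has a genuine gap exactly at \eqref{eq:scalp}, the claim this theorem adds to the literature it cites.
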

\begin{proof} This statement has been proved in \cite{Mondino-Naber14}, however, since some of the claims that we make only appear implicitly in the course of the various proofs, for completeness we point out where such claims appear.

The fact that $X$ can be covered by Borel charts $(1+\eps)$-biLipschitz to subsets of the Euclidean space is the main result in \cite{Mondino-Naber14}. The fact that the coordinates of the charts are distance functions is part of the construction, see \cite[Theorem 6.5]{Mondino-Naber14} (more precisely, in \cite{Mondino-Naber14} the coordinates are distance functions plus well chosen constants, so that 0 is always in the image, but this has no effect for our discussion). 

Thus we are left to prove \eqref{eq:scalp}. Looking at the construction of the sets $U^\eps_{i,j}$ in \cite{Mondino-Naber14} we see that they are contained in the set of $x$'s such that
\begin{equation}
\label{eq:MN}
\sup_{r'\in(0,r)}\frac1{\mm(B_{r'}(x))}\int_{B_{r'}(x)}\sum_{1\leq k\leq k'\leq i}\Big|\d\Big(\tfrac{\sfd_{x^\eps_{i,j,k}}+\sfd_{x^\eps_{i,j,k'}}}{\sqrt 2}-\sfd_{x^\eps_{i,j,k,k'}}\Big) \Big|^2\,\d\mm\leq \eps_1,
\end{equation}
where $r,\eps_1>0$ are bounded from above in terms of $K,N,\eps$ only and the points $x^\eps_{i,j,k,k'}$ are built together with the $x^\eps_{i,j,k}$'s (in \cite{Mondino-Naber14}  $x_{i,j,k},x_{i,j,k'},x_{i,j,k,k'}$ are called $p_i,p_j,p_i+p_j$ respectively). We remark that the choice of $r,\eps_1$ affects the construction of the sets $U_{i,j}^\eps$ and the points $x^\eps_{i,j,k}$, and that in any case  $\eps_1$ can be chosen to be smaller than $\big|\tfrac{\eps}{\sqrt 2+1}\big|^2$.

\bigskip

Notice that in \cite{Mondino-Naber14} the distance in \eqref{eq:MN} is scaled by a factor $r$, whose only effect is that $r'$ varies in $(0,1)$ rather than in $(0,r)$. The validity of \eqref{eq:MN} comes from the definition of maximal function, called $M^k$,  given in \cite[Equation/Definition (67)]{Mondino-Naber14}, the fact that the sets called $U^k_{\eps_1,\delta_1}$ introduced in \cite[Equation/Definition (70)]{Mondino-Naber14} are contained, by definition, in $\{M^k\leq \eps_1\}$ and the fact that the charts as given by \cite[Theorem 6.5]{Mondino-Naber14} are defined on the sets $B^{\tilde\sfd}_{\delta_1}\cap U^k_{\eps_1,\delta_1}\subset U^k_{\eps_1,\delta_1}$. Notice also that in \cite{Mondino-Naber14} the notion of weak upper gradient $|D f|$ of a function $f$ is used, in place of the pointwise norm of the differential used in our writing of \eqref{eq:MN}, but the two objects coincide (see \cite{Gigli14}).

\bigskip

We come back to the proof of \eqref{eq:scalp}. Recall that, being $\mm$ doubling (see \cite{Sturm06II}), Lebesgue differentiation theorem holds. Hence  from \eqref{eq:MN} and the discussion thereafter we see that up to a properly choosing $\eps_1$, and thus $U^\eps_{i,j},x^\eps_{i,j,k},x^\eps_{i,j,k,k'}$, we can assume  that
\[
\Big|\d\Big(\tfrac{\sfd_{x^\eps_{i,j,k}}+\sfd_{x^\eps_{i,j,k'}}}{\sqrt 2}-\sfd_{x^\eps_{i,j,k,k'}}\Big) \Big|^2\leq\big|\tfrac{\eps}{\sqrt 2+1}\big|^2\quad\mm-a.e.\ on\ U_{i,j}^\eps.
\]
Thus to conclude it is sufficient to prove that for given $x_1,x_2,y\in X$ we have
\[
\Big|\d\Big(\tfrac{\sfd_{x_1}+\sfd_{x_2}}{\sqrt 2}-\sfd_{y}\Big) \Big|\leq\tfrac{\eps}{\sqrt 2+1} \qquad\Rightarrow\qquad |\la\nabla\sfd_{x_1},\nabla\sfd_{x_2}\ra|\leq\eps.
\]
This follows with minor algebraic manipulations from the identity \eqref{eq:norm1}:
\[
\begin{split}
|\la\nabla\sfd_{x_1},\nabla\sfd_{x_2}\ra|&=\Big|\big|\d\big(\tfrac{\sfd_{x_1}+\sfd_{x_2}}{\sqrt 2}\big)\big|^2-\tfrac{|\d\sfd_{x_1}|^2+|\d\sfd_{x_2}|^2}{2}\Big|\\
&=\Big|\big|\d\big(\tfrac{\sfd_{x_1}+\sfd_{x_2}}{\sqrt 2}\big)\big|^2-1\Big|\\
&=\Big|\big|\d\big(\tfrac{\sfd_{x_1}+\sfd_{x_2}}{\sqrt 2}\big)\big|^2-|\d\sfd_y|^2\Big|\\
&=\Big|\la\d\big(\tfrac{\sfd_{x_1}+\sfd_{x_2}}{\sqrt 2}\big)+\d\sfd_y\,,\,\d\big(\tfrac{\sfd_{x_1}+\sfd_{x_2}}{\sqrt 2}\big)-\d\sfd_y\ra\Big|\\
&\leq(\sqrt 2+1)\big|\d\big(\tfrac{\sfd_{x_1}+\sfd_{x_2}}{\sqrt 2}\big)-\d\sfd_y\big|.
\end{split}
\]
\end{proof}

The last result we shall need is the Laplacian comparison estimate for the distance function obtained in \cite{Gigli12}. Such result holds in the sharp form, but we recall it in qualitative form, sufficient for our purposes:
\begin{theorem}\label{prop:distrlap}
Let $(X,\sfd,\mm)$ be a $\RCD^*(K,N)$ space and $x\in X$. Then the distributional Laplacian of $\sfd_x$ in $X\setminus\{x\}$ is a measure, i.e.\ there exists a Radon measure $\mu$ on $X$ such that for every $f\in \LIP_{\rm bs}(X)$ with $\supp(f)\subset X\setminus\{x\}$ it holds
\begin{equation}
\label{eq:distrlap}
\int\la\nabla f,\nabla\sfd_x\ra\,\d\mm=-\int f\,\d\mu.
\end{equation}
\end{theorem}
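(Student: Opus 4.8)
The plan is to deduce the statement for $\sfd_x$ from the corresponding one for $w:=\tfrac{\sfd_x^2}{2}$, which is the object naturally controlled by the curvature condition, and then to transfer the information to $\sfd_x$ by a chain rule away from $x$. First I would recall that $w$ is $c$-concave (for the cost $c=\tfrac12\sfd^2$), hence a Kantorovich potential, and that the defining property of $\CD^*(K,N)$ — the appropriate $(K,N)$-convexity of the entropy along $W_2$-geodesics — forces a \emph{one-sided} bound on the distributional Laplacian of $w$, namely
\begin{equation}
\label{eq:lapcompw}
-\int \la\nabla f,\nabla w\ra\,\d\mm\leq \int f\,h\,\d\mm\qquad\forall f\in\LIP_{\rm bs}(X),\ f\geq0,
\end{equation}
for a suitable locally bounded function $h=h(K,N,\sfd_x)$. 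This is precisely the Laplacian comparison established in \cite{Gigli12}, and it is the deep analytic input of the whole argument.

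Granted \eqref{eq:lapcompw}, the passage to a measure is soft. The functional $f\mapsto \int f\,h\,\d\mm+\int\la\nabla f,\nabla w\ra\,\d\mm$ is linear and, by \eqref{eq:lapcompw}, non-negative on non-negative $f\in\LIP_{\rm bs}(X)$; since such functions are dense in $C_c(X)$ and the functional is monotone, a Riesz representation argument yields a non-negative Radon measure $\nu$ representing it. Hence the distributional Laplacian of $w$ is the signed Radon measure $h\mm-\nu$; equivalently $\nabla w\in D(\bold{div}_\mm)$ with $\bold{div}_\mm(\nabla w)=h\mm-\nu$ (all of this understood locally, after a harmless cut-off, in case $|\nabla w|=\sfd_x$ fails to be globally $L^2$).

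It remains to transfer this to $\sfd_x$ away from $x$. On $X\setminus\{x\}$ the function $\tfrac{1}{\sfd_x}$ is locally bounded and locally Lipschitz, and from $\nabla w=\sfd_x\nabla\sfd_x$ one has $\nabla\sfd_x=\tfrac{1}{\sfd_x}\nabla w$. Applying the Leibniz rule of Proposition~\ref{prop:leibdiv} to $\tfrac{1}{\sfd_x}\nabla w$ on (relatively compact subsets of) $X\setminus\{x\}$, and using $\la\nabla\tfrac{1}{\sfd_x},\nabla w\ra=-\tfrac{1}{\sfd_x^2}\la\nabla\sfd_x,\nabla w\ra=-\tfrac{1}{\sfd_x}$ — which follows from $\la\nabla\sfd_x,\nabla w\ra=\sfd_x|\nabla\sfd_x|^2$ and the identity $|\nabla\sfd_x|=1$ of \eqref{eq:norm1} — one obtains
\begin{equation}
\label{eq:chainsketch}
\bold{div}_\mm\restr{X\setminus\{x\}}(\nabla\sfd_x)=\tfrac{1}{\sfd_x}\big(\bold{div}_\mm(\nabla w)-\mm\big),
\end{equation}
whose right-hand side is a Radon measure on $X\setminus\{x\}$ because $\tfrac{1}{\sfd_x}$ is locally bounded there. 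This is exactly the measure $\mu$ claimed in \eqref{eq:distrlap}.

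The only genuinely hard step is \eqref{eq:lapcompw}: everything after it is the elementary combination of a Riesz representation and a chain rule, whereas the one-sided Laplacian bound itself encodes the full force of the curvature-dimension condition and is the content of \cite{Gigli12}. The one technical point to watch in \eqref{eq:chainsketch} is that the factor $\tfrac{1}{\sfd_x}$ and the test functions must be kept away from $x$; this is precisely why the conclusion is localized to $X\setminus\{x\}$ and is handled routinely via a cut-off, the singularity of $\tfrac{1}{\sfd_x}$ at $x$ being the only obstruction to a global statement.
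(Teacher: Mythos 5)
The paper offers no proof of this statement: it is recalled verbatim, in qualitative form, from \cite{Gigli12}, so the only meaningful comparison is with the argument given there, and your outline reproduces it faithfully --- the one-sided bound on the distributional Laplacian of $\sfd_x^2/2$ coming from $c$-concavity and the curvature-dimension condition, the positivity/Riesz step converting the one-sided bound into a (signed) Radon measure, and the chain rule with the locally bounded factor $1/\sfd_x$ (equivalently $\phi(t)=\sqrt{2t}$), together with $|\nabla\sfd_x|=1$ as in \eqref{eq:norm1}, to pass to $\sfd_x$ on $X\setminus\{x\}$, the cut-offs correctly handling both the global integrability of $\nabla(\sfd_x^2/2)$ and the hypotheses of the Leibniz rule of Proposition \ref{prop:leibdiv}. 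The soft steps you supply are correct; just be aware that your key display (the one-sided Laplacian bound for $\sfd_x^2/2$) is itself the main content of \cite{Gigli12}, so, exactly as in the paper, the deep input remains a citation rather than something you have proved.
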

Read in terms of measure-valued divergence, the above theorem yields:
\begin{corollary}\label{cor:divm}
Let $(X,\sfd,\mm)$ be a $\RCD^*(K,N)$ space, $x\in X$ and $\psi\in\LIP(X)$ with support compact and contained in $X\setminus \{x\}$. Then the vector field $\psi\nabla\sfd_x\in L^2(TX)$ belongs to $D(\bold{div}_\mm)$, i.e.\ it has measure valued divergence on $X$ in the sense of Definition \ref{def:mdiv}.
\end{corollary}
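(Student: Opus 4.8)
The plan is to verify Definition \ref{def:mdiv} directly for $v:=\psi\nabla\sfd_x$, reducing everything to Theorem \ref{prop:distrlap} by a Leibniz-type manipulation. First I would record that $v$ is a genuine element of $L^2(TX)$: by \eqref{eq:norm1} we have $|v|=|\psi|$ $\mm$-a.e., which is bounded and supported on the compact set $\supp\psi$, and $\mm$ is finite on compact sets. This observation matters because $\nabla\sfd_x$ itself need not be globally $L^2$ (think of $\R^N$, where $|\nabla\sfd_x|\equiv 1$), so it is essential that throughout the argument we only ever handle the compactly supported field $\psi\nabla\sfd_x$, never $\nabla\sfd_x$ in isolation.

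Next I would fix an arbitrary test function $f\in\LIP(X)$ with compact support and use the pointwise Leibniz rule for the differential to write
\[
\d f(\psi\nabla\sfd_x)=\psi\,\langle\nabla f,\nabla\sfd_x\rangle=\langle\nabla(\psi f),\nabla\sfd_x\rangle-f\,\langle\nabla\psi,\nabla\sfd_x\rangle.
\]
The crucial point is that $\psi f$ is Lipschitz with support compact and contained in $\supp\psi\subset X\setminus\{x\}$, so Theorem \ref{prop:distrlap} applies to it and gives $\int\langle\nabla(\psi f),\nabla\sfd_x\rangle\,\d\mm=-\int\psi f\,\d\mu$, with $\mu$ the Radon measure furnished by that theorem. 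Integrating the displayed identity against $\mm$ then yields
\[
\int\d f(\psi\nabla\sfd_x)\,\d\mm=-\int f\,\d\big(\psi\mu+\langle\nabla\psi,\nabla\sfd_x\rangle\,\mm\big).
\]

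It remains to check that $\nu:=\psi\mu+\langle\nabla\psi,\nabla\sfd_x\rangle\,\mm$ is a Radon measure, for then the last display is precisely the defining identity showing $\psi\nabla\sfd_x\in D(\bold{div}_\mm)$ with $\bold{div}_\mm(\psi\nabla\sfd_x)=\nu$. For the first summand, $\psi$ is bounded and compactly supported and $\mu$ is Radon, so $\psi\mu$ is a finite measure; for the second, Cauchy--Schwarz and \eqref{eq:norm1} give $|\langle\nabla\psi,\nabla\sfd_x\rangle|\le|\nabla\psi|\le\Lip(\psi)$, and this density lives on the compact set $\supp\psi$, so $\langle\nabla\psi,\nabla\sfd_x\rangle\,\mm$ is a finite signed measure.

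Conceptually this is just the Leibniz rule of Proposition \ref{prop:leibdiv} on $\Omega=X\setminus\{x\}$ applied to $g=\psi$ and $v=\nabla\sfd_x$, combined with the locality Proposition \ref{prop:localdiv}: since $\psi$ vanishes on an open neighbourhood of $x$, where $\psi\nabla\sfd_x$ trivially has zero divergence, one glues the divergence on $X\setminus\{x\}$ with the zero divergence near $x$ to get a divergence on all of $X$. The one genuine obstacle is the point flagged in the first paragraph: $\nabla\sfd_x$ is not a priori in $L^2(TX)$, so Proposition \ref{prop:leibdiv} cannot be invoked verbatim with $v=\nabla\sfd_x$. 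The direct computation above is the clean way around this, since the cutoffs $\psi$ and $\psi f$ render every quantity compactly supported before $\nabla\sfd_x$ ever enters.
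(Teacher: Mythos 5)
Your proof is correct, and at heart it is the same argument as the paper's: the paper deduces membership in $D(\bold{div}_\mm,X\setminus\{x\})$ from Theorem \ref{prop:distrlap} together with the Leibniz rule (Proposition \ref{prop:leibdiv}), and then glues with the trivially zero divergence on a neighbourhood of $x$ via the locality statement (Proposition \ref{prop:localdiv}); your computation is exactly this Leibniz manipulation unrolled into a direct verification of Definition \ref{def:mdiv}. Two differences are worth recording. First, since you test against arbitrary compactly supported Lipschitz $f$ on all of $X$ and let the cutoff (through the test function $\psi f$, whose support avoids $x$) perform the localization, the gluing step via Proposition \ref{prop:localdiv} becomes unnecessary in your version. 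Second, you rightly flag a point that the paper's one-line proof glosses over: Proposition \ref{prop:leibdiv} is stated for $v\in D(\bold{div}_\mm,\Omega)$, hence in particular for $v\in L^2(TX)$, whereas $\nabla\sfd_x$ is in general only locally $L^2$ (indeed $|\nabla\sfd_x|=1$ $\mm$-a.e.\ by \eqref{eq:norm1}, which is not square-integrable when $\mm(X)=\infty$), so that proposition cannot be invoked verbatim with $v=\nabla\sfd_x$; your direct computation, in which only the compactly supported objects $\psi\nabla\sfd_x$ and $\psi f$ ever appear, repairs this cleanly (alternatively, one could localize the paper's argument to a bounded open set containing $\supp\psi$ whose closure avoids $x$). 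Your explicit formula $\bold{div}_\mm(\psi\nabla\sfd_x)=\psi\mu+\langle\nabla\psi,\nabla\sfd_x\rangle\,\mm$, with both summands finite signed measures thanks to $|\langle\nabla\psi,\nabla\sfd_x\rangle|\le\Lip(\psi)$ and the compactness of $\supp\psi$, agrees with what Propositions \ref{prop:leibdiv} and \ref{prop:localdiv} would yield, so the two routes produce the same divergence measure.
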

\begin{proof}
Since $|\psi\nabla\sfd_x|\leq |\psi|$ it is clear that $\psi\nabla\sfd_x\in L^2(TX)$. Theorem \ref{prop:distrlap} above, the very definition of measure valued divergence given in \ref{def:mdiv} and the Leibniz rule given in Proposition \ref{prop:leibdiv} ensure that $\psi\nabla\sfd_x\in D(\bold{div}_\mm,X\setminus\{x\})$. On the other hand, by construction $\psi\nabla\sfd_x$ is 0 on a neighbourhood of $x$ and thus,  trivially, has 0 measure valued divergence in such neighbourhood. The conclusion comes from Proposition \ref{prop:localdiv} 
\end{proof}
We now have all the ingredients to prove our main result:
\begin{theorem}\label{thm:main}
With the same notations and assumptions of Theorem \ref{thm:MN}, we pick $\eps<\frac1N$.

Then for every $i,j$ we have
\[
(\varphi^\eps_{i,j})_*(\mm\restr{U^\eps_{i,j}})\ll\mathcal L^i.
\]
\end{theorem}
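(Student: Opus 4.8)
The plan is to fix $i,j$, abbreviate $\varphi:=\varphi^\eps_{i,j}$, $U:=U^\eps_{i,j}$, $x_k:=x^\eps_{i,j,k}$, $\sfd_k:=\sfd_{x_k}$ for $k=1,\dots,i$, and $\mu:=\varphi_*(\mm\restr U)$, and to manufacture $i$ one-dimensional normal currents $T_1,\dots,T_i$ on $\R^i$ to which Theorem \ref{thm:DPR} applies with $d=i$: currents with $\mu\ll\|T_k\|$ whose orienting vectors are $\mu$-a.e.\ linearly independent. The conclusion $\mu\ll\mathcal L^i$ is then precisely the output of Theorem \ref{thm:DPR}. Before starting I would reduce to a convenient situation. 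By inner regularity of $\mm$ I take $U$ compact; since $\mm(\{x_k\})=0$ and $\mathcal L^i$-absolute continuity is stable under countable unions, I further intersect $U$ with $\{\min_k\sfd_k\ge\delta\}$ and let $\delta\downarrow 0$ at the end, so that I may assume $U$ lies at positive distance from each centre $x_k$. Finally, replacing $X$ by a large closed ball containing $U$ (compact, as $X$ is proper), I may assume $\mm$ finite, so that $\mm_Y:=\varphi_*\mm$ is a finite Radon measure on $\R^i$, $\varphi$ is proper and of bounded deformation with $\varphi_*\mm=\mm_Y$, and $\mu=\P_\varphi(\nchi_U)\,\mm_Y\ll\mm_Y$; write $G:=\{\P_\varphi(\nchi_U)>0\}$, the set where $\mu$ is concentrated.

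First I would record the independence of the gradients on $U$. By \eqref{eq:norm1} the Gram matrix $\la\nabla\sfd_k,\nabla\sfd_{k'}\ra$ has unit diagonal, while \eqref{eq:scalp} bounds its off-diagonal entries by $\eps$ $\mm$-a.e.\ on $U$; since $i\le N$ and $\eps<\tfrac1N$, Gershgorin's theorem gives smallest eigenvalue $\ge 1-(i-1)\eps\ge\tfrac1N>0$, so $\nabla\sfd_1,\dots,\nabla\sfd_i$ are pointwise linearly independent $\mm$-a.e.\ on $U$, hence independent on $U$ in the module sense. Next I produce the currents. For each $k$ pick a Lipschitz cutoff $\psi_k$ with $\psi_k\equiv 1$ on $U$ and support compact and contained in $X\setminus\{x_k\}$ (possible since $U$ is bounded away from $x_k$); by Corollary \ref{cor:divm} the field $\psi_k\nabla\sfd_k$ has measure-valued divergence on $X$. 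Proposition \ref{prop:pfdiv} then gives $u_k:=\P_\varphi\big(\d\varphi(\psi_k\nabla\sfd_k)\big)\in D(\mathbf{div}_{\mm_Y})$, whence by Corollary \ref{co:curr} the current $T_k:=\mathcal I(u_k)$ is normal, with mass $\|T_k\|=|u_k|\,\mm_Y$ and orienting vector $\iota(u_k)/|\iota(u_k)|$ (Proposition \ref{thm:isomemb}).

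It remains to control these orienting vectors on $G$. Here I would invoke Proposition \ref{prop:ind}: since $\varphi\restr U$ is injective with Lipschitz inverse by \eqref{eq:bil}, Lipschitz functions are dense in $W^{1,2}(X)$ (as $X$ is $\RCD$), and the $\nabla\sfd_k$ are independent on $U$, the vectors $\P_\varphi(\d\varphi(\nchi_U\nabla\sfd_k))$ are independent on $G$; by the last assertion of Proposition \ref{thm:isomemb} their images under $\iota$ are then linearly independent for $\mm_Y$-a.e., hence $\mu$-a.e., point of $G$. Granting that on $G$ these coincide, up to $\mu$-null sets and normalization, with the orienting vectors of the $T_k$, I would conclude: the $\iota(u_k)$ are nonzero and independent $\mu$-a.e., so $\mu\ll|u_k|\,\mm_Y=\|T_k\|$ for every $k$ (hypothesis i) of Theorem \ref{thm:DPR}) and the orientations $\overrightarrow{T_k}$ are $\mu$-a.e.\ independent (hypothesis ii)). Theorem \ref{thm:DPR} then yields $\mu\ll\mathcal L^i$, and undoing the reductions gives the claim.

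The delicate point — already flagged in item v) of the introduction — is the reconciliation invoked above. The field used to build $T_k$ must be the smoothly cut off $\psi_k\nabla\sfd_k$ in order to possess measure-valued divergence, whereas Proposition \ref{prop:ind} certifies independence only for the sharply restricted $\nchi_U\nabla\sfd_k$ (which has no reason to have measure-valued divergence). Since $\P_\varphi$ averages a field over the whole fibre $\varphi^{-1}(y)$, the two pushforwards differ by $\P_\varphi(\d\varphi(\nchi_{U^c}\psi_k\nabla\sfd_k))$, and this a priori need not vanish on $G$, because a fibre through $U$ may also meet $\supp(\psi_k)\setminus U$. I expect this to be the main obstacle, to be overcome by the cut-off and approximation procedure: shrinking the supports of the $\psi_k$ towards $U$ and passing to the limit, exploiting the injectivity of $\varphi\restr U$, so that the spurious fibre contributions become $\mu$-negligible. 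Everything else is a routine assembly of the quoted results.
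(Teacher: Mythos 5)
Your architecture coincides with the paper's: same reductions (compact $U$, $x_k\notin U$), same independence estimate on $U$ (the paper sums the inequalities $|f_k|\le\frac1N\sum_{k'\ne k}|f_{k'}|$ rather than invoking Gershgorin, but the content is identical and both rest on $\eps<\frac1N$, $i\le N$), same chain Corollary \ref{cor:divm} $\to$ Proposition \ref{prop:pfdiv} $\to$ Corollary \ref{co:curr} to manufacture normal currents, and same use of Propositions \ref{prop:ind} and \ref{thm:isomemb} to certify independence, all feeding Theorem \ref{thm:DPR}. But the step you flag yourself is a genuine gap, and your conditional formulation of it (``granting that on $G$ these coincide, up to $\mu$-null sets and normalization, with the orienting vectors of the $T_k$'') is not available: exactly as you observe, $\P_\varphi$ averages over fibres, a fibre through a point of $G$ may meet $\supp(\psi_k)\setminus U$, and so $u_k=\P_\varphi(\d\varphi(\psi_k\nabla\sfd_{x_k}))$ need not agree with $u^0_k=\P_\varphi(\d\varphi(\nchi_U\nabla\sfd_{x_k}))$ on any part of $G$. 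Note that this infects your hypothesis i) as well, since $\mu\ll\|T_k\|$ requires $|u_k|>0$ $\mu$-a.e., which rests on the same unproven nonvanishing. Your proposed remedy also slightly misidentifies the mechanism: the injectivity of $\varphi\restr U$ enters only through Proposition \ref{prop:ind}, to certify independence of the \emph{limit} fields $u^0_k$; the spurious fibre contributions are not killed by injectivity but simply by the fact that if $\psi^\delta\to\nchi_U$ pointwise and boundedly, then $\psi^\delta\nabla\sfd_{x_k}\to\nchi_U\nabla\sfd_{x_k}$ in $L^2(TX)$, whence $u^\delta_k\to u^0_k$ in $L^2_\mu(T\R^i)$ by continuity of $\d\varphi$ and $\P_\varphi$, and $\iota(u^\delta_k)\to\iota(u^0_k)$ by Proposition \ref{thm:isomemb}.

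The one missing idea, which is how the paper closes the gap, is to \emph{invert the logic}: one never shows that the cut-off fields are independent on $G=\{\P_\varphi(\nchi_U)>0\}$. Instead, for each $\delta>0$ define $A^\delta:=\{x\in\R^i:\iota(u^\delta_1)(x),\ldots,\iota(u^\delta_i)(x)\text{ are independent}\}$ and apply Theorem \ref{thm:DPR} at fixed $\delta$ to the currents $\mathcal I(u^\delta_k)$ and the localized measure $\mu\restr{A^\delta}$ (with $\mu=\varphi_*\mm$): hypothesis i) is trivial there because on $A^\delta$ all the $\iota(u^\delta_k)$ are nonzero, so $\mu\restr{A^\delta}\ll|u^\delta_k|\mu=\|\mathcal I(u^\delta_k)\|$, and hypothesis ii) holds by the very definition of $A^\delta$. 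This gives $\mu\restr{A^{\delta_n}}\ll\mathcal L^i$ for any $\delta_n\downarrow0$; then, since linear independence of an $i$-tuple of vectors is an open condition, the $L^2$-convergence $\iota(u^{\delta_n}_k)\to\iota(u^0_k)$ yields $\mu\big(A^0\setminus\bigcup_nA^{\delta_n}\big)=0$, where $A^0$ is the corresponding set for the limit fields and contains $G$ up to $\mu$-null sets by Propositions \ref{prop:ind} and \ref{thm:isomemb}. Hence $\varphi_*(\mm\restr U)=\P_\varphi(\nchi_U)\,\mu\ll\mathcal L^i$, with no reconciliation of orienting vectors on $G$ ever needed --- only that the sets where the approximating fields happen to be independent asymptotically exhaust $G$. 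With this substitution your outline becomes precisely the paper's proof.
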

\begin{proof} 

\noindent{\bf Set up} By the inner regularity of $\mm$ applied to the sets $U^\eps_{i,j}\setminus\{x^\eps_{i,j,1},\ldots,x^\eps_{i,j,i}\}$ we can assume that the $U^\eps_{i,j}$'s are compact and that $x^\eps_{i,j,k}\notin U^\eps_{i,j}$ for every $k=1,\ldots,i$. Now fix $i,j$ and, for brevity, write $\varphi,U, x_1,\ldots,x_i$ in place of $\varphi^\eps_{i,j},U^\eps_{i,j},x^\eps_{i,j,1},\ldots,x^\eps_{i,j,i}$  respectively.

\noindent{\bf Step 1: Normal currents}
Let $(\psi^\delta)_{\delta>0}$ be a family of Lipschitz, compactly supported $[0,1]$-valued maps on $X$ pointwise converging to $\nchi_U$ as $\delta\downarrow0$ and consider the vector fields
\[
v^\delta_k:=\psi^\delta\nabla\sfd_{x_{k}}\in L^2(TX),\qquad\forall k=1,\ldots,i.
\]
By Corollary \ref{cor:divm} we know that $v^\delta_k\in D(\bold{div}_\mm)$.

Now observe that $\varphi:X\to\R^i$ is a Lipschitz and proper map (i.e.\ the preimage of compact sets is compact) and thus $\mu:=\varphi_*\mm$ is a Radon measure on $\R^i$ and equipping $\R^i$ with such measure we see that $\varphi:X\to\R^i$ is of bounded deformation. By Proposition \ref{prop:pfdiv} the vector fields
\[
u^\delta_k:={\sf Pr}_{\varphi }(\d\varphi (v^\delta_k))\in L^2_\mu(T\R^i)
\] 
all belong to $D(\bold{div}_\mu)$ and since by construction they have compact support we see from Corollary \ref{co:curr} that the currents 
\[
\mathcal I(u^\delta_k)=\overrightarrow{\mathcal I(u^\delta_k)}\|{\mathcal I(u^\delta_k)}\|=\frac{\iota(u^\delta_k)}{|u^\delta_k|}(|u^\delta_k|\mu),\qquad k=1,\ldots,i
\] 
are normal. We also notice that trivially 
\begin{equation}
\label{eq:triv}
\mu\restr{\{|u^\delta_k|>0\}}\ll |u^\delta_k|\mu= \|{\mathcal I(u^\delta_k)}\|,\qquad k=1,\ldots,i.
\end{equation}

\noindent{\bf Step 2: Independent vector fields}
We  claim that 
\begin{equation}
\label{eq:inddist}
\text{the vector fields $\nabla\sfd_{x_1},\ldots,\nabla\sfd_{x_i}\in L^2_{loc}(TX)$ are independent on $U$}
\end{equation} 
and to prove this we shall use our choice of $\eps<\frac1N$.

Let $f_1,\ldots,f_i\in L^\infty(X)$ be such that $\sum_{k=1}^if_k\nabla\sfd_{x_k}=0$ $\mm$-a.e.\ on $U$ and notice that
\[
0=\langle\nabla\sfd_{x_k},\sum_{k'=1}^if_{k'}\nabla\sfd_{x_{k'}}\rangle=f_k|\d\sfd_{x_k}|^2+\sum_{k'\neq k}f_{k'}\langle\nabla\sfd_{x_k},\nabla\sfd_{x_{k'}}\rangle\quad\mm-a.e.\ on\ U.
\]
From \eqref{eq:norm1},  \eqref{eq:scalp} and the fact that $\eps<\frac1N$ we obtain 
\[
|f_k|=|f_k||\d\sfd_{x_k}|^2\leq \sum_{k'\neq k}|f_{k'}|\,|\langle\nabla\sfd_{x_k},\nabla\sfd_{x_{k'}}\rangle|\leq \frac1N \sum_{k'\neq k}|f_{k'}|\qquad\mm-a.e.\ on\ U.
\]
Adding up in $k=1,\ldots,i$  we deduce $\sum_k|f_k|\leq \frac {i-1}N\sum_k|f_k|$ $\mm$-a.e.\ on $U$, and since  by Theorem \ref{thm:MN} we know that $i\leq N$, this  forces $\sum_k|f_k|=0$ $\mm$-a.e.\ on $U$, which is the claim \eqref{eq:inddist}.

Now notice that  Theorem \ref{thm:MN} grants that $\varphi:X\to \R^i$ is of bounded deformation (having equipped $\R^i$ with the measure $\mu=\varphi_*\mm$), partially invertible on $U$ and such that $(\varphi\restr U)^{-1}$ is Lipschitz. Therefore Proposition \ref{prop:ind} grants that the vector fields
\[
u^0_k:=\P_\varphi(\d\varphi(\nchi_U\nabla\sfd_{x_k}))\in L^2_\mu(T\R^i) \qquad k=1,\ldots,i
\]
are independent on $\{\P_\varphi(\nchi_U)>0\}$, which by Proposition \ref{thm:isomemb} is the same as to say that 
\begin{equation}
\label{eq:ind}
\text{$\iota(u_1^0)(x),\ldots,\iota(u^0_i)(x)\in\R^i$ are independent for $\mu$-a.e.\ $x$ such that $\P_\varphi(\nchi_U)(x)>0$.
}
\end{equation}

\noindent{\bf Conclusion} The fact that  the family $(\psi^\delta)$ is equibounded in $L^\infty(X)$ and pointwise converges to $\nchi_U$  easily implies that $v^\delta_k\to \nchi_U\nabla\sfd_{x_k}$ in $L^2(TX)$ for every $k=1,\ldots,i$. By the continuity of $\d\varphi$ and $\P_\varphi$ we deduce that for any $k=1,\ldots,i$  we have $u^\delta_k\to u^0_k$ in $L^2_\mu(T\R^i)$ as $\delta\downarrow0$ and thus by Proposition \ref{thm:isomemb} that $\iota(u^\delta_k)\to \iota(u^0_k)$ in $L^2(\R^i,\R^i;\mu)$ as $\delta\downarrow0$.

Let $A^\delta\subset \R^i$, $\delta\geq0$ be the Borel sets defined, up to $\mu$-negligible sets, as
\[
A^\delta:=\Big\{x\in\R^i\ :\ \iota(u^\delta_1)(x),\ldots,\iota(u^\delta_i)(x)\text{ are independent}\Big\}.
\]
Since being an independent family is an open condition, the convergence just proved ensures that for any $\delta_n\downarrow0$ we have
\begin{equation}
\label{eq:a0}
\mu\Big(A^0\setminus\bigcup_nA^{\delta_n}\Big)=0.
\end{equation}
For $\delta>0$, we apply Theorem \ref{thm:DPR} to the currents $\mathcal I(u^\delta_k)$: since $\mu$-a.e.\ on $A^\delta$ the vectors $\iota(u^\delta_k)$ are all nonzero, we have  $\mu\restr{A^\delta}\ll \mu\restr{\{|u^\delta_k|>0\}}$ for every $k=1,\ldots,i$ and thus \eqref{eq:triv} and Theorem \ref{thm:DPR} grant that
\[
\mu\restr {A^{\delta_n}}\ll\mathcal L^i\qquad \forall n\in\N
\]
and thus from \eqref{eq:a0} we deduce
\[
\mu\restr{A^0}\ll\mathcal L^i.
\]
On the other hand by \eqref{eq:ind} we know that, up to $\mu$-negligible sets, we have $A_0\supset \{\P_\varphi(\nchi_U)>0\}$ which together with the above implies
\[
\P_\varphi(\nchi_U)\mu\ll\mathcal L^i.
\]
As we have $\varphi_*(\mm\restr U)=\P_\varphi(\nchi_U)\mu$, the proof is achieved.
\end{proof}

\def\cprime{$'$} \def\cprime{$'$}

\end{document}